\documentclass{amsart}
\usepackage{amsmath,amssymb,amsfonts, amsthm}
\usepackage{stmaryrd}
\usepackage{bbold}
\usepackage{mathtools}
\usepackage{thmtools}
\usepackage{comment}
\usepackage{float}
\usepackage{xcolor}
\definecolor{purple}{rgb}{0.53,0.0,0.69} 
\definecolor{darkblue}{rgb}{0,0,0.6} 
\usepackage[pdfborder=0,pagebackref,colorlinks,citecolor=purple,linkcolor=purple,urlcolor=darkblue]{hyperref}
\usepackage{mathpartir}
\usepackage[all,2cell]{xy}\UseAllTwocells
\usepackage{tikz-cd}
\usepackage{color}
\usepackage{enumitem}
\usepackage{cleveref}
\crefname{thm}{Theorem}{Theorems}
\crefname{cor}{Corollary}{Corollaries}
\crefname{prop}{Proposition}{Propositions}
\crefname{defn}{Definition}{Definitions}
\crefname{rmk}{Remark}{Remarks}
\crefname{fig}{Figure}{Figures}
\crefname{ex}{Example}{Examples}
\crefname{lem}{Lemma}{Lemmas}
\crefname{ax}{Axiom}{Axioms}

\crefname{figure}{Figure}{Figures}

\ifx\color@rgb\@undefined\else
  \definecolor{maroon}{rgb}{0.5,0,0}
\fi

\theoremstyle{plain}
\newtheorem{thm}{Theorem}[section] 
\newtheorem{cor}[thm]{Corollary}
\newtheorem{prop}[thm]{Proposition}
\newtheorem{lem}[thm]{Lemma}
\newtheorem{conj}[thm]{Conjecture}
\newtheorem{cons}[thm]{Construction}
\newtheorem{var}[thm]{Variant}

\theoremstyle{definition}
\newtheorem{defn}[thm]{Definition}
\newtheorem{ntn}[thm]{Notation}

\theoremstyle{remark}
\newtheorem{ex}[thm]{Example}
\newtheorem{rmk}[thm]{Remark}
\newtheorem{war}[thm]{Warning}

\makeatletter
\let\c@equation\c@thm
\makeatother
\numberwithin{equation}{section}

\newcommand{\Cat}{\mathsf{Cat}}
\newcommand{\Set}{\mathsf{Set}}

\renewcommand{\S}{\mathcal{S}}
\renewcommand{\i}{\infty}
\renewcommand{\lim}{\varprojlim}
\renewcommand{\P}{Poly_{\mathcal{S}}}
\newcommand{\colim}{\varinjlim}

\title{On polynomial functors and polynomial comonads over infinity groupoids}
\author{Kun Chen}

\address{Shanghai center for mathematical sciences\\ Fudan University\\ 2005 Songhu Road 200438\\ Shanghai, China}
\email{chenkun21@m.fudan.edu.cn}
\begin{document}
\maketitle

\begin{abstract}
  We show that single-variable polynomial functors over the category $\S$ of infinity groupoids, as defined by Gepner-Haugseng-Kock in \cite{gepner2022operads}, are exactly colimits of representable copresheaves indexed by infinity groupoid. This allows us to establish certain categorical properties of the $\i$-category $\P$, in parallel with the case of the ordinary category $Poly$. We define the notion of polynomial comonad under the monoidal structure of $\P$ induced by composition of polynomials, and describe a construction toward exploring the connection between polynomial comonads and complete Segal spaces. This construction partially generalizes the classical one given in the proof of a theorem of Ahman-Uustalu.
\end{abstract}

\tableofcontents

\section{Introduction}
\label{sec:introduction}
\subsection{Background and motivation}
In ordinary category theory, the notion of \textit{polynomial functor} is a categorification of the usual notion of polynomial in algebra. In applied category theory, the category of single-variable polynomial functors over sets, denoted as $Poly$, has found remarkable applications to e.g. discrete dynamic systems (\cite{dnmc}), database theory (\cite{spivak2025functorial}), information theory (\cite{shannon}) and artificial intelligence (\cite{lang}). The category $Poly$ possesses various formal properties that are interesting from a computational point of view. For a standard reference on $Poly$, we refer to the book \cite{niu2023polynomial} or the survey \cite{handbook}. To give a definition, we typically write a polynomial functor $p\in Poly$ as a coproduct\begin{equation}\label{1steqt}
    p=\sum_{b\in B}y^{p[b]}.
\end{equation}Here $p[b]$ is a family of sets, parametrized by another set $B$. The variable $y$ is thought as a set instead of a number in the usual notion of polynomial, and $y^{p[b]}$ is understood as an endofunctor over $\Set$, which is the coYoneda embedding of the set $p[b]$.
\begin{war}
    Polynomial functor does not exactly correspond to the usual notion of polynomial like how the category of finite sets categorifies $\mathbb{N}$, since we allow e.g. $p=\mathbb{Q}y^{\mathbb{R}}$ as a polynomial functor. Even when the sets $p[b]$ and $B$ are all finite, this corresponds to polynomials with natural number coefficients.
\end{war}
In view of the rich theory discovered for the ordinary category $Poly$, it is reasonable to expect that an infinity categorical generalization might be of interests. In a such generalization, the fundamental role of $\Set$ is replaced by the $\i$-category $\S$ of infinity groupoids. The goal of this paper is to take a first step to study the $\i$-category $\P$ of polynomial functors over infinity groupoids, and to transport some known results of $Poly$ into the world of $\P$.\par
Related to our work, in \cite[section 2]{gepner2022operads}, the authors gave a definition of (multi-variable) polynomial functor over $\S$, which we will review in \cref{reviewdef}. However, for the motivation of our work, it is more desirable to have a definition mimicking \cref{1steqt}. There recall that:\par
\textit{A polynomial functor over $\Set$ is a coproduct of representable copresheaves.}\par
If we interpret coproduct as a colimit over discrete diagram, the data of a such polynomial functor can be identified with a functor $p:B\rightarrow\Set$ where $B\in\Set$. Inspired by this observation, it is tempting to give an alternative definition:\par
\textit{A polynomial functor over $\S$ is a colimit of representable copresheaves indexed by $\i$-groupoid.}\par
In fact, our first main result shows that our definition indeed coincides with that of Gepner-Haugseng-Kock. Approaching the theory of $\P$ based on this definition above, in this paper we are able to derive some similar results as in the case of $Poly$. We show that the $\i$-category $\P$ is bicomplete. We give an explicit formula for the composition of two polynomials as a polynomial, and show that $\P$ has a closed monoidal infinity structure induced by the composition product. Moreover, we want to study comonoid objects (whose definition in general will be given in \cref{comoddef}) in this monoidal structure. This is motivated by a beautiful theorem on $Poly$, originally discovered by Ahman and Uustalu in the study of computer science (\cite{mainthm}), which roughly states that:\par
\textit{Polynomial comonads over $\Set$ are the same as (small) categories.}\par
For a precise statement and an overview, see \cref{lablesubsec}. More generally, in \cite{neizaifanchou} it is proved that polynomial comonads over a general locally Cartesian closed category $\mathcal{E}$ are exactly the internal category objects in $\mathcal{E}$. Following this line of thought, we might speculate that comonoid objects in $\P$ exactly correspond to the internal $\i$-categories in $\S$, which are known to be \textbf{complete Segal spaces} by \cite[corollary 4.3.16]{arxivagain}. That is:\par
\textit{Polynomial comonads over $\S$ are the same as complete Segal spaces.}\par
The precise statement is still absent currently, but we will formulate a conjecture in \cref{mainconj}, and discuss a partial result toward it. More specifically, in \cref{lastsubsec} we prove that every comonoid object in $\P$ canonically gives rise to an augmented cosimplicial space satisfying a Segal-like condition. We mention that the main ingredient in this construction, is a functor (\cref{maincons}) that is conspicuously similar to the procedure of \textit{d{\'e}calage} of simplicial objects.\par
This paper is organized as follows. Section 2 is preparatory, in which we collect some technical lemmas that are useful for the later development. In section 3 we introduce our main formalization of polynomial functors over $\S$, define the $\i$-category $\P$ of such objects, and describe the mapping spaces in $\P$. We prove that every (small) limit and colimit of polynomials is still a polynomial in section 4. We prove that any composition of polynomials is still a polynomial in section 5. Section 6 is concerned with monoidal and comonoidal infinity structures in general, also the case of $\P$. Finally, we present our attempts toward the generalization of the Ahman-Uustalu theorem in the last section 7.

\subsection{Notations and conventions}
Throughout the paper, $\S$ denotes the $\i$-category of topological spaces, though we will often refer to the objects in $\S$ as $\i$-groupoids (\cite[remark 5.5.1.9]{Kerodon}). The straightening and unstraightening functors over a base $\i$-category $B$ are denoted by $St_B$ and $Un_B$, where the subscript might be suppressed when the base $B$ is clear. Limit and colimit are written as $\lim$ and $\colim$ respectively. We use $Map_{\mathcal{C}}(-,-)$ to denote the mapping spaces in an $\i$-category $\mathcal{C}$. We use $\Cat_\i$ to denote the ($\i$,1)-category of small ($\i$,1)-categories. For the reader's convenience, the comma category $\S_{/\S}$ that appears several times in the paper is introduced before \cref{sovers}.\par
Slightly abusing the language, we will often say ``$p:E\rightarrow B$ is a polynomial functor'' (in the bundle picture) or ``$p:B\rightarrow\S$ is a polynomial functor'' (in the diagram picture). Up to straightening-unstraightening equivalence, we will use the same notation $p$ indistinguishably. In both cases, the direction space at a position $b\in B$ is denoted as $p[b]$. For a morphism $\varphi$ of polynomial functors, we always use $\varphi_1$ and $\varphi^\sharp$ to denote the corresponding morphisms on positions and on directions.

\subsection{Acknowledgments}
The author would like to thank his advisor Guozhen Wang for sharing his insights on infinity category theory and for inspecting the draft of this paper. The author would like to thank David Spivak for valuable conversations and comments.

\section{Double limits and colimits}
In this section, we collect some preliminary results on the interchangeability of certain limits and colimits. These are mostly well known, but are hard to be located precisely in the literature. For the purpose of this paper, here we will only consider limits and colimits indexed by an $\i$-groupoid, in the category of $\i$-groupoids. A key ingredient is the \textit{straightening-unstraightening equivalence}:\begin{equation}
St_B:\S_{/B}\simeq Fun(B,\S):Un_B.
\end{equation}
More specifically,\begin{itemize}
\item The unstraightening is induced by the pullback of $\S_{*/}\rightarrow\S$ in $\Cat_\i$ (\cite[theorem 1.4]{clark}). Here $\S_{*/}$ is the undercategory on the final object $*\in\S$.
\item The straightening (at least on objects) is induced by taking the fibers: given $(E\xrightarrow{p}B)\in\S_{/B}$, as an $\S$-valued functor, $St_B(p)$ sends $b\in B$ to the (homotopy) pullback $\{b\}\times_BE$ in $\S$. The corresponding action on morphisms is constructed as \textit{covariant transport} in \cite[section 5.2.2]{Kerodon}.
\end{itemize}
The following theorem will be invoked repeatedly in this paper. It is really the central result that allows us to generalize polynomial functors from the setting of sets to that of infinity groupoids.
\begin{thm}
For $B\in\S$, given a functor $p:B\rightarrow\S$ viewed as a diagram indexed by $B$, let $E\rightarrow B$ be the unstraightening $Un_B(p)$ of $p$.\begin{itemize}
    \item The colimit of the diagram coincides with the total space of unstraightening up to homotopy equivalence (\cite[proposition 7.4.3.1]{Kerodon}):\begin{equation}\label{repeat1}
        \colim(p)\simeq E.
    \end{equation}
    \item The limit of the diagram coincides with the space of sections of unstraightening up to homotopy equivalence (\cite[proposition 7.4.1.6]{Kerodon}):\begin{equation}\label{repeat2}
        \lim(p)\simeq Map_{\S_{/B}}(B,E).
    \end{equation}
\end{itemize}
\end{thm}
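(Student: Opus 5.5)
Both statements are cited results (Kerodon 7.4.3.1 and 7.4.1.6), so the plan is to explain why they hold via straightening--unstraightening and Yoneda, rather than reprove them from scratch. For \eqref{repeat1}, I would compute $Map_\S(\colim(p),X)$ for arbitrary $X\in\S$ and show it agrees naturally with $Map_\S(E,X)$; by Yoneda this gives $\colim(p)\simeq E$. By the universal property of the colimit, $Map_\S(\colim(p),X)\simeq Map_{Fun(B,\S)}(p,\underline{X})$, where $\underline{X}$ is the constant diagram at $X$. Under $Un_B$ the constant diagram corresponds to the projection $B\times X\to B$: since $Un_B$ is pullback of $\S_{*/}\to\S$, the pullback along a constant functor is a product. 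Because $St_B\dashv Un_B$ is an equivalence, we get
\begin{equation*}
Map_{Fun(B,\S)}(p,\underline{X})\simeq Map_{\S_{/B}}(E,B\times X)\simeq Map_\S(E,X),
\end{equation*}
where the last step is the universal property of the product in the slice. Naturality in $X$ is automatic since every step is a functorial equivalence.

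For \eqref{repeat2}, I would argue dually with maps in. For $Y\in\S$, the universal property of the limit gives $Map_\S(Y,\lim p)\simeq Map_{Fun(B,\S)}(\underline{Y},p)$. Unstraightening turns this into $Map_{\S_{/B}}(B\times Y,E)$. The projection $\pi\colon B\times Y\to B$ induces a pullback functor $\S_{/B}\to\S_{/Y}$, and dependent product $\prod_\pi$ along $\pi$ gives its right adjoint; this adjunction exists because $\S$ is locally cartesian closed. So
\begin{equation*}
Map_{\S_{/B}}(B\times Y,E)\simeq Map_\S\bigl(Y,\,Map_{\S_{/B}}(B,E)\bigr),
\end{equation*}
where the section space $Map_{\S_{/B}}(B,E)$ is exactly the fiber of $Map_\S(B,E)\to Map_\S(B,B)$ over the identity. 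Yoneda then gives the claimed equivalence. Alternatively, one can take $Y=*$ to identify the underlying spaces and then check that the identification is natural.

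The main obstacle is the middle step of each chain: identifying $Un_B$ of a constant functor with the product projection, coherently and naturally. For $B$ an $\infty$-groupoid, I would handle this with the fact that $Un_B$ commutes with base change. A constant functor factors through $*$, and $Un_*$ of $X$ is $X\to *$; base change along $B\to *$ then yields $B\times X\to B$. With that in hand, the remaining steps are formal adjunction manipulations.
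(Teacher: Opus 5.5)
Your argument is correct in substance. It takes a different route from the paper, which gives no proof of its own and simply defers to Kerodon (Propositions 7.4.3.1 and 7.4.1.6).

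Write $t\colon B\to *$. Your two Yoneda computations amount to the following: under $Un_B\colon Fun(B,\S)\simeq\S_{/B}$, the constant-diagram functor corresponds to $t^*\colon\S\to\S_{/B}$, $Y\mapsto(B\times Y\to B)$. By uniqueness of adjoints, the triple $\colim\dashv const\dashv\lim$ is then carried to $t_!\dashv t^*\dashv t_*$, where $t_!$ forgets to $\S$ and $t_*E\simeq Map_{\S_{/B}}(B,E)$.

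This packaging gives both equivalences as equivalences of functors of $p$, not just objectwise. The paper uses that tacitly later, e.g.\ in \cref{sovers}, where $c(f(b))$ must be identified with the total space $X_b$ naturally in $b$. The citation is shorter and sits in Kerodon's more general setting of fibrations over arbitrary simplicial sets. Your route instead takes the full straightening equivalence and its compatibility with base change as black boxes. It also uses that $B$ is a groupoid, so the left adjoint of $t^*$ is the plain forgetful functor rather than the weak homotopy type of the total space.

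One justification in your second part is wrong as written. Pullback along $\pi\colon B\times Y\to B$ is a functor $\S_{/B}\to\S_{/B\times Y}$, not $\S_{/B}\to\S_{/Y}$. The adjunction that actually produces
\begin{equation*}
Map_{\S_{/B}}(B\times Y,E)\simeq Map_\S\bigl(Y,Map_{\S_{/B}}(B,E)\bigr)
\end{equation*}
is $t^*\dashv t_*$ along $t\colon B\to *$, since $B\times Y\to B$ is $t^*Y$. Concretely, curry in $\S$ and use that $Map_\S(Y,-)$ preserves the fiber of $Map_\S(B,E)\to Map_\S(B,B)$ over $id_B$.

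Your own alternative with $Y=*$ avoids the issue entirely, because
\begin{equation*}
\lim(p)\simeq Map_\S(*,\lim(p))\simeq Map_{Fun(B,\S)}(\underline{*},p)\simeq Map_{\S_{/B}}(B,E)
\end{equation*}
directly. So no Yoneda step is needed for the second statement; only the first genuinely requires varying the test object.
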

\begin{cor}\label{constdiag}
    In the theorem above, taking $p$ to be constant on some $B^\prime\in\S$ implies that \begin{equation}
        \colim_BB^\prime\simeq B\times B^\prime,\ \ \ \lim_BB^\prime\simeq Map_{\S_{/B}}(B,B\times B^\prime)\simeq Map_{\S}(B,B^\prime).
    \end{equation}
\end{cor}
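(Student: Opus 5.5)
The plan is to specialize the preceding theorem to the constant diagram $p=c_{B'}:B\rightarrow\S$ with value $B'$. The only real input is to identify the unstraightening of a constant functor. I claim that $Un_B(c_{B'})$ is the projection $\pi:B\times B^\prime\rightarrow B$.

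To verify the claim, I will argue at the level of the pullback definition. Unstraightening is the pullback of $\S_{*/}\rightarrow\S$ along $p$. The constant functor factors as $B\rightarrow *\xrightarrow{B'}\S$, and pullbacks compose, so $Un_B(c_{B'})$ is the pullback along $B\rightarrow *$ of $Un_*(B')$. Now $Un_*(B')$ is the fiber of $\S_{*/}\rightarrow\S$ over $B'$, which is the space $Map_\S(*,B')\simeq B'$. Pulling back along $B\rightarrow *$ therefore gives $B\times B'\rightarrow B$.

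As a cross-check, I can run the argument through straightening instead. The projection $\pi$ has fibers $\{b\}\times_B(B\times B')\simeq B'$. Its straightening is the pullback along $B\rightarrow *$ of the straightening of $B'\rightarrow *$, by naturality of $St$ with respect to base change. This is again the constant functor. I expect this identification to be the only step needing care, namely the compatibility of (un)straightening with base change. That compatibility is standard and is immediate from the pullback description quoted above.

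Given the claim, \eqref{repeat1} yields $\colim_BB'\simeq B\times B'$. Likewise \eqref{repeat2} yields $\lim_BB'\simeq Map_{\S_{/B}}(B,B\times B')$, where $B$ is viewed over itself via the identity.

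For the final equivalence, I will use that $B\times B'$ is the product of $\mathrm{id}_B$ and $B\times B'\rightarrow B$ in $\S_{/B}$, i.e. a pullback in $\S$. The mapping space in the slice is therefore the fiber of
\[
Map_\S(B,B\times B')\simeq Map_\S(B,B)\times Map_\S(B,B')\longrightarrow Map_\S(B,B)
\]
over $\mathrm{id}_B$. This fiber is $Map_\S(B,B')$, which completes the argument.
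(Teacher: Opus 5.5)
Your proposal is correct and is essentially the paper's argument: the paper gives no separate proof, treating the corollary as an immediate specialization of the theorem via the (implicit) fact that the unstraightening of the constant functor at $B^\prime$ is the trivial bundle $B\times B^\prime\rightarrow B$, which you verify by pasting pullbacks against $\S_{*/}\rightarrow\S$ before reading off the colimit and limit formulas. Your fiber computation for the last equivalence is also right. However, the remark that $B\times B^\prime$ is ``the product of $\mathrm{id}_B$ and $B\times B^\prime\rightarrow B$'' is tautological and is not what justifies the fiber formula for slice mapping spaces; the cleanest justification is the adjunction between the forgetful functor $\S_{/B}\rightarrow\S$ and $B\times-:\S\rightarrow\S_{/B}$.
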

Let $\S_{/\S}\subset(\Cat_\i)_{/\S}$ denote the comma category of $\Cat_\i$ with respect to the inclusion functor $\iota:\S\rightarrow\Cat_\i$ and $\S\in\Cat_\i$. We think $\S_{/\S}$ as the category of \textit{diagrams of $\i$-groupoids indexed by $\i$-groupoid}, so taking colimit yields a functor $c:\S_{/\S}\rightarrow\S$. Since the forgetful functor $(\Cat_\i)_{/\S}\rightarrow\Cat_\i$ creates colimit (\cite[proposition 7.1.4.20]{Kerodon}), $(\Cat_\i)_{/\S}$ is cocomplete. Since $\iota$ preserves colimit, as a left adjoint (\cite[example 6.2.2.24]{Kerodon}), we deduce that the forgetting $\S_{/\S}\rightarrow\S$ again creates colimit so $\S_{/\S}$ is cocomplete.
\begin{prop}\label{sovers}
    Under the notations above, for any $B\in\S$ and any functor $f:B\rightarrow\S_{/\S}$, we have \begin{equation}
     \colim_B(c\circ f)\simeq c(\colim_B(f)).
    \end{equation}
\end{prop}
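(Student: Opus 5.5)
The plan is to show that the colimit functor $c:\S_{/\S}\rightarrow\S$ is itself a left adjoint. The statement then follows, because left adjoints preserve all colimits, and in particular colimits indexed by $B$. First I would pin down $c$ concretely. An object of $\S_{/\S}$ is a pair $(A,q:A\rightarrow\S)$ with $A\in\S$. By \cref{repeat1}, $c(A,q)\simeq Un_A(q)$, the total space of the unstraightening. So, up to the straightening equivalence applied fiberwise, $\S_{/\S}$ can be identified with the arrow category $Fun(\Delta^1,\S)$. An object $(A,q)$ corresponds to $Un_A(q)\rightarrow A$, and under this identification $c$ becomes the source functor $ev_0:Fun(\Delta^1,\S)\rightarrow\S$.

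To justify the identification, I would take the cartesian fibration $\S_{/\S}\rightarrow\S$, $(A,q)\mapsto A$, whose fiber over $A$ is $Fun(A,\S)$. I would compare it with the target fibration $Fun(\Delta^1,\S)\rightarrow\S$, whose fiber is $\S_{/A}$. The transition functors are restriction along $A'\rightarrow A$ and pullback respectively. Straightening--unstraightening $St_A\dashv Un_A$ is natural in $A$ for these transition functors, since unstraightening is defined by pullback of $\S_{*/}\rightarrow\S$. Hence the two fibrations are equivalent over $\S$. Checking this naturality coherently is the step I expect to be the main obstacle. If it becomes delicate, the cleanest route is to observe that both fibrations classify the same functor $\S^{op}\rightarrow\Cat_\i$, $A\mapsto Fun(A,\S)\simeq\S_{/A}$, because $\S_{/\S}$ is exactly the unstraightening of $A\mapsto Fun(A,\S)$ restricted along $\iota$.

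Granting the identification, $ev_0$ has a right adjoint $X\mapsto(X\rightarrow *)$ viewed as an arrow, or equivalently $(*,\,X:*\rightarrow\S)$ in $\S_{/\S}$. I would also verify this directly without the identification: a map $(A,q)\rightarrow(*,X)$ consists of the unique map $A\rightarrow *$ together with a natural transformation $q\rightarrow\mathrm{const}_X$ in $Fun(A,\S)$. The space of such data is $Map_{Fun(A,\S)}(q,\mathrm{const}_X)\simeq Map_\S(\colim q,X)$ by the universal property of colimit. Spelling out the mapping space in the comma category $\S_{/\S}$ shows that it is $Map_{\Cat_\i}(A,*)\times Map_{Fun(A,\S)}(q,\mathrm{const}_X)$, which equals $Map_\S(c(A,q),X)$, naturally in $(A,q)$. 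So $c$ is left adjoint to $X\mapsto(*,X)$, and therefore $c$ preserves colimits.

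Finally, applying this to $f:B\rightarrow\S_{/\S}$ gives $c(\colim_Bf)\simeq\colim_B(c\circ f)$. Here $\colim_Bf$ exists by the cocompleteness of $\S_{/\S}$ established just before the statement.
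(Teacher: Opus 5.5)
There is a genuine gap: your argument breaks at the identification of $\S_{/\S}$ with the arrow category $Fun(\Delta^1,\S)$. In the paper, $\S_{/\S}$ is a comma category inside the $(\i,1)$-categorical slice $(\Cat_\i)_{/\S}$. So a morphism $(A,q)\rightarrow(A',q')$ is a map $g:A\rightarrow A'$ together with a path from $q$ to $q'\circ g$ in $Map_{\Cat_\i}(A,\S)\simeq Fun(A,\S)^{\simeq}$, i.e.\ a natural \emph{equivalence} $q\simeq q'\circ g$. This is also why the footnote to \cref{justlab} warns that diagrams in $\P$ need not induce diagrams in $\S_{/\S}$.

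Consequently $\S_{/\S}\rightarrow\S$ is a right fibration with fibers $Fun(A,\S)^{\simeq}$, not a cartesian fibration with fibers $Fun(A,\S)$. Your fallback, that both sides classify $A\mapsto Fun(A,\S)$, is exactly where it goes wrong. Under unstraightening, $\S_{/\S}$ is the wide but non-full subcategory of $Fun(\Delta^1,\S)$ whose morphisms are pullback squares.

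Your mapping-space computation is therefore incorrect. $Map_{\S_{/\S}}((A,q),(*,X))$ is the space of natural equivalences $q\simeq\mathrm{const}_X$. Already for $A=*$ and $q=Y$ it is the space of equivalences $Y\simeq X$, not $Map_{\S}(Y,X)$. In fact $c$ has no right adjoint on $\S_{/\S}$ at all. Its value at $X=*$ would have to be a space $T$ over $\S^{\simeq}$ all of whose fibers are contractible, forcing $T\simeq\S^{\simeq}$, which is not small.

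What your adjunction does prove is that $ev_0$ commutes with colimits in $Fun(\Delta^1,\S)$. That is the statement for the variant of $\S_{/\S}$ whose morphisms carry arbitrary natural transformations $q\rightarrow q'\circ g$. To reach the paper's $\S_{/\S}$, you must still show that the embedding $(A,q)\mapsto(Un_A(q)\rightarrow A)$ sends $\colim_Bf$ to the pointwise colimit.

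Since $\S_{/\S}\rightarrow\S$ creates colimits, $\colim_Bf=(E,q)$ with $E=\colim_BB_b$. So what is needed is $Un_E(q)\simeq\colim_BUn_{B_b}(f(b))$, which is a descent statement. The colimit cocone identifies $Un_{B_b}(f(b))\simeq B_b\times_EUn_E(q)$ naturally in $b$. Because pullback along $Un_E(q)\rightarrow E$ preserves colimits in $\S$, one gets $\colim_B(B_b\times_EUn_E(q))\simeq E\times_EUn_E(q)\simeq Un_E(q)$.

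This universality-of-colimits step is exactly the content of the paper's proof, phrased there as $X_b\simeq B_b\times_EX\simeq\{b\}\times_BX$ followed by \cref{repeat1}. Once you have it, the adjunction is no longer needed.
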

\begin{proof}
    For $b\in B$, say $f(b):B_b\rightarrow\S$ is a diagram. By the universal property of colimit, each $f(b)$ factors through a functor $q$ from $E:=\colim_{b\in B}B_b$ to $\S$. Since $\S_{/\S}\rightarrow\S$ creates colimit, this functor $q:E\rightarrow\S$ coincides with $\colim_B(f)$. Denote $Un(q):=(X\rightarrow E)$, so $X\simeq c(\colim_B(f))$ by \cref{repeat1}. Denote $Un(f(b)):=(X_b\rightarrow B_b)$, then $\colim_{b\in B}X_b\simeq\colim_B(c\circ f)$. Since $\S$ is locally Cartesian closed, each pullback in $\S$ preserves colimit, as a left adjoint. We thus deduce that $X_b\simeq B_b\times_EX$ by \cref{repeat1}. We then have a commutative diagram \begin{equation}
		\begin{tikzcd}
			X_b \arrow[r] \arrow[d] & B_b \arrow[d] \arrow[r] & \{b\} \arrow[d] \\
			X \arrow[r]             & E \arrow[r]                   & B              
		\end{tikzcd}
	\end{equation} in which both the left and right squares are Cartesian. By the pasting law, $X_b\simeq\{b\}\times_BX$. Again by \cref{repeat1}, we conclude that $X\simeq\colim_{b\in B}X_b$.
\end{proof}
\begin{rmk}
Under the notation $q:E\rightarrow\S$, we might rephrase this proposition in terms of double colimit:\begin{equation}\label{dbcolim1}
    \colim_E(q)\simeq\colim_B\colim_{B_b}(q|_{B_b}).
\end{equation}
\end{rmk}
\begin{prop}
    For $B,B^\prime\in\S$, given a functor $m:B\times B^\prime\rightarrow\S$, let $X\rightarrow B\times B^\prime$ be the unstraightening of $m$, we have\begin{equation}
        \varinjlim_B\varinjlim_{B^\prime}m(b,b^\prime)\simeq\varinjlim_{B^\prime}\varinjlim_Bm(b,b^\prime)\simeq X
    \end{equation}\begin{equation}\label{mapspace}
    \varprojlim_B\varinjlim_{B^\prime}m(b,b^\prime)\simeq\varinjlim_{f\in Fun(B,B^\prime)}\varprojlim_Bm(b,f(b))\simeq Map_{\mathcal{S}_{/B}}(B,X)
	\end{equation}where in the last equation, we view $X$ as $X\xrightarrow{Un(m)}B\times B^\prime\xrightarrow{proj_B}B$ in $\S_{/B}$.
\end{prop}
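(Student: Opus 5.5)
The plan is to prove both displayed equivalences by reducing everything to the two formulas \cref{repeat1} and \cref{repeat2}, together with the fact that $\S$ is locally Cartesian closed, so that pullback preserves colimits. Throughout, $X\rightarrow B\times B^\prime$ is $Un(m)$, and for $b\in B$ I write $X_b:=\{b\}\times_BX$, viewed over $B^\prime$.

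\textbf{First equivalence.} For fixed $b$, the restriction $m(b,-):B^\prime\rightarrow\S$ has unstraightening $X_b\rightarrow B^\prime$. This is because fibers compose: $\{b^\prime\}\times_{B^\prime}X_b\simeq\{(b,b^\prime)\}\times_{B\times B^\prime}X$. So by \cref{repeat1} we get $\colim_{B^\prime}m(b,-)\simeq X_b$.

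Next I check that the functor $b\mapsto X_b$ is the straightening of the composite $X\rightarrow B\times B^\prime\rightarrow B$. This is the point that needs care, since the equivalence on objects is clear but one must check that the covariant transport matches. I would handle it by observing that $Un_{B\times B^\prime}$ followed by pushforward along $proj_B$ corresponds, under straightening, to left Kan extension along $proj_B$. This is exactly fiberwise colimit, because $B^\prime$-fibers of a projection give the pointwise formula for the Kan extension.

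Applying \cref{repeat1} again over $B$ then gives $\colim_B\colim_{B^\prime}m\simeq X$. The other order follows by symmetry. Alternatively, it is a special case of \cref{sovers} with $f(b)=m(b,-)$, where $\colim_B$ of the constant family $B^\prime$ is $B\times B^\prime$ by \cref{constdiag}.

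\textbf{Second equivalence.} From the first step, $b\mapsto\colim_{B^\prime}m(b,-)$ straightens $X\rightarrow B$. So \cref{repeat2} gives directly
$$\lim_B\colim_{B^\prime}m\simeq Map_{\S_{/B}}(B,X).$$

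For the middle term I would compute $Map_{\S_{/B}}(B,X)$ by factoring a section through $B\times B^\prime$. There is a fiber sequence
$$Map_{\S_{/B\times B^\prime}}\big(B\xrightarrow{(id,f)}B\times B^\prime,\,X\big)\rightarrow Map_{\S_{/B}}(B,X)\rightarrow Map_{\S_{/B}}(B,B\times B^\prime)\simeq Fun(B,B^\prime).$$
The last identification uses \cref{constdiag}. Straightening over the base $Fun(B,B^\prime)$ and applying \cref{repeat1} writes $Map_{\S_{/B}}(B,X)$ as the colimit over $f\in Fun(B,B^\prime)$ of the fibers. Each fiber is the space of sections of the pullback $(id,f)^*X\rightarrow B$, which straightens to $b\mapsto m(b,f(b))$. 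By \cref{repeat2} that fiber is $\lim_Bm(b,f(b))$.

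The step I expect to be the main obstacle is making precise that the fiber of the post-composition map over $f$ is the section space of $(id,f)^*X$, functorially in $f$. I would get this from the adjunction $(id,f)_!\dashv(id,f)^*$ on slices, giving $Map_{\S_{/B}}(B,(id,f)^*X)\simeq Map_{\S_{/B\times B^\prime}}(B,X)$. The fiber description itself is the standard fact that mapping spaces in an overcategory are fibers of mapping spaces.
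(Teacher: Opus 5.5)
Your proposal is correct and follows essentially the paper's route. The first equivalence is the special case $B_b=B'$ of \cref{dbcolim1}; your Kan-extension argument along $proj_B$ simply justifies that case directly. The second is obtained as in the paper: you fiber $Map_{\S_{/B}}(B,X)$ over $Map_{\S_{/B}}(B,B\times B')\simeq Fun(B,B')$, identify the fiber over $f$ with the section space of $(id,f)^*X$ (hence with $\varprojlim_B m(b,f(b))$ by \cref{repeat2}), and apply \cref{repeat1}. The only difference is cosmetic: you obtain the fiber from the overcategory description of mapping spaces plus $(id,f)_!\dashv(id,f)^*$, whereas the paper applies the limit-preserving functor $Map_{\S_{/B}}(B,-)$ to the Cartesian square defining $(id,f)^*X$.
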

\begin{proof}
    For the first equation, it is known that colimits always commute with colimits. It remains to show that the double colimit coincides with $X$, a colimit over $B\times B^\prime$. But this is just \cref{dbcolim1} with $B_b=B^\prime$ constantly.\par
    For the last equation, it immediately follows from the first equation and \cref{repeat2} that $\varprojlim_B\varinjlim_{B^\prime}m(b,b^\prime)\simeq Map_{\mathcal{S}_{/B}}(B,X)$. It remains to show the other half. Note that $Fun(B,B^\prime)$ can be identified with the mapping space $Map_{\S_{/B}}(B,B\times B^\prime)$. We have two Cartesian diagrams \begin{equation}\label{bigdiag}
		\begin{tikzcd}
			{\varinjlim_Bm(b,f(b))} \arrow[d] \arrow[r] & X \arrow[d, "Un(m)"']                                                                &  & {\varprojlim_Bm(b,f(b))} \arrow[d] \arrow[r] & {Map_{\mathcal{S}_{/B}}(B,X)} \arrow[d]      \\
			B \arrow[r, "f"]                            & B\times B^\prime \arrow[rr, "{Map_{\mathcal{S}_{/B}}(B,-)}"', Rightarrow, shift left=9] &  & \{f\} \arrow[r]                              & {Map_{\mathcal{S}_{/B}}(B,B\times B^\prime)}
		\end{tikzcd}
	\end{equation} connected via the limit-preserving functor $Map_{\S_{/B}}(B,-)$. From the right Cartesian square and \cref{repeat1}, we conclude that $\varinjlim_f\varprojlim_Bm(b,f(b))\simeq Map_{\mathcal{S}_{/B}}(B,X)$.
\end{proof}
\begin{rmk}
    The last equation might be thought as analogue to the classical fact that $\Set$ is a \textit{distributive category}, i.e. products distribute over coproducts. 
\end{rmk}

\section{Polynomial functors}
\subsection{The main definition}
Because $\S$ is locally Cartesian closed, for any morphism $p:E\rightarrow B$ in $\S$, the pullback functor $p^*:\S_{/B}\rightarrow\S_{/E}$ admits a right adjoint, known as the \textit{dependent product} $p_*$. We state the first definition of polynomial functor over $\S$.
\begin{defn}\label{reviewdef}
    (\cite[definition 2.1.1]{gepner2022operads}) An endofunctor over $\S$ is called a polynomial functor, if for some morphism $p:E\rightarrow B$ in $\S$, up to natural equivalence it can be decomposed as \begin{equation}
        \S\xrightarrow{\times E}\S_{/E}\xrightarrow{p_*}\S_{/B}\xrightarrow{forget}\S.
    \end{equation}
\end{defn}
\begin{rmk}
    This is adapted verbatim from the definition of polynomial functor over $\Set$ in \cite[section 1.4]{gambino2013polynomial}. If the reader is wondering how such a definition captures the usual notion of polynomial, see \cite[section 1]{weber2015polynomials}. This should not be confused with the homonymic notion in Goodwillie calculus, which in general is not an endofunctor.
\end{rmk}

By the straightening-unstraightening equivalence, a morphism $E\rightarrow B$ in $\S$ can be equivalently viewed as a diagram from $B$ to $\S$. In this formalism, we have the following compatibility result.
\begin{thm}
    Let $B\xrightarrow{p}\S$, $b\mapsto p[b]$ be a diagram, denote $(E\xrightarrow{q}B):=Un(p)$. Then we have an equivalence in $\S_{/B}$, natural in $X\in\S$:\begin{equation}
        q_*(X\times E)\simeq\colim_BMap_{\S}(p[b],X),\ \ \ \forall X\in\S.
    \end{equation}
\end{thm}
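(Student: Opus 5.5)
The plan is to prove the equivalence in $\S_{/B}$ by the Yoneda lemma: we compare the functors that the two sides corepresent on $\S_{/B}$. Write $P_X\rightarrow B$ for the unstraightening of the diagram $b\mapsto Map_{\S}(p[b],X)$. By \cref{repeat1}, the total space of $P_X$ is $\colim_BMap_{\S}(p[b],X)$, and this is how the right-hand side is regarded as an object of $\S_{/B}$. It then suffices to produce equivalences
\begin{equation*}
Map_{\S_{/B}}(T,q_*(X\times E))\simeq Map_{\S_{/B}}(T,P_X),
\end{equation*}
natural in $(T\rightarrow B)\in\S_{/B}$ and in $X\in\S$.

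\emph{Left-hand side.} By the adjunction $q^*\dashv q_*$ we have
\begin{equation*}
Map_{\S_{/B}}(T,q_*(X\times E))\simeq Map_{\S_{/E}}(T\times_BE,X\times E).
\end{equation*}
Since $X\times E\rightarrow E$ is the product of $X$ with the terminal object of $\S_{/E}$, this is further equivalent to $Map_{\S}(T\times_BE,X)$. Every step here is manifestly natural in $T$ and $X$.

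\emph{Right-hand side.} Let $t:=St_B(T)$, so that $t(b)=T_b$. Since straightening is an equivalence,
\begin{equation*}
Map_{\S_{/B}}(T,P_X)\simeq Map_{Fun(B,\S)}(t,Map_{\S}(p[-],X)).
\end{equation*}
Because $B$ is an $\i$-groupoid, the space of natural transformations is the end over $Tw(B)$. The twisted arrow category of a groupoid is equivalent to $B$ itself, so this end reduces to $\lim_B Map_{\S}(T_b,Map_{\S}(p[b],X))$. Currying rewrites each term as $Map_{\S}(T_b\times p[b],X)$. Since $Map_{\S}(-,X)$ sends colimits to limits, the whole expression becomes $Map_{\S}(\colim_B(T_b\times p[b]),X)$.

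\emph{Identifying the colimit.} It remains to show $\colim_B(T_b\times p[b])\simeq T\times_BE$. I would argue exactly as in the proof of \cref{sovers}. The diagram $b\mapsto T_b\times p[b]$ is the pointwise product in $Fun(B,\S)$. Under $Un_B$, pointwise products correspond to fiber products over $B$, because fibers of $T\times_BE\rightarrow B$ are computed by pasting pullback squares. Its unstraightening is therefore $T\times_BE\rightarrow B$, and \cref{repeat1} identifies the colimit with the total space $T\times_BE$. Combining the two chains then gives the required natural equivalence, and the Yoneda lemma in $\S_{/B}$ yields $q_*(X\times E)\simeq P_X$ naturally in $X$.

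\emph{Main obstacle.} The hard part is coherence: the naturality in $T$ of the right-hand chain. This covers the end formula for mapping spaces in $Fun(B,\S)$ with its reduction to a limit over $B$, and the compatibility of $Un_B$ with products. Pointwise these are clear; making them functorial is where care is needed. I would handle it by noting that $-\times_BE:\S_{/B}\rightarrow\S_{/B}$ corresponds under straightening to $-\times p$, since both are the pullback along $E\rightarrow B$. With this, the right-hand side becomes $Map_{Fun(B,\S)}(t\times p,\underline{X})$, which equals $Map_{\S_{/B}}(T\times_BE,X\times B)\simeq Map_{\S}(T\times_BE,X)$ naturally. This bypasses the end computation and matches the left-hand chain directly.
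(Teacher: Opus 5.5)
Your overall Yoneda strategy works, but the step you rely on in the last paragraph to settle coherence is circular. You claim that $Map_{Fun(B,\S)}(t,Map_{\S}(p[-],X))$ ``becomes'' $Map_{Fun(B,\S)}(t\times p,\underline{X})$. That is the claim that $b\mapsto Map_{\S}(p[b],X)$ is the exponential $\underline{X}^{p}$ in $Fun(B,\S)$. Knowing that $-\times_BE$ corresponds to $-\times p$ only tells you that the right adjoints $(-)^{E}$ and $(-)^{p}$ correspond; it does not tell you that $\underline{X}^{p}$ is computed pointwise. Moreover, $Map_{\S_{/B}}(T,q_*q^*Y)\simeq Map_{\S_{/B}}(T\times_BE,Y)$, so the exponential $(X\times B)^{E}$ in $\S_{/B}$ is $q_*q^*(X\times B)=q_*(X\times E)$. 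The step is therefore a restatement of the theorem and bypasses nothing.

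The real content is your end computation, and naturality in $T$ has to be secured there. You can use a form of the end formula $Map_{Fun(B,\S)}(t,F)\simeq\lim_{Tw(B)}Map_{\S}(t(-),F(-))$ that is natural in both variables (Glasman; Gepner--Haugseng--Nikolaus). Alternatively, use the lemma opening the paper's subsection on mapping spaces, $Map_{\S_{/B}}(Y,Z)\simeq\lim_BMap_{\S}(Y_b,Z_b)$, whose proof does not use this theorem. The other steps are natural as they stand. Currying comes from the two-variable adjunction in $\S$. $Map_{\S}(-,X)$ turns $\colim_B$ into $\lim_B$. Finally, $\colim_B(t\times p)\simeq T\times_BE$ holds because $Un_B$ is an equivalence, hence preserves products, combined with \cref{repeat1}. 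With this repair your argument is complete.

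So repaired, your proof takes a genuinely different route from the paper's. The paper transports $q_*$ across straightening to $Ran_q$, since $q^*$ corresponds to precomposition with $q$. It evaluates $Ran_q(\underline{X})(b)$ by the pointwise limit over the comma category $b\downarrow q$. It then reduces this to $\lim_{p[b]}X\simeq Map_{\S}(p[b],X)$, because $p[b]\subset b\downarrow q$ is coreflective and hence cofinal by Quillen's Theorem A. That computes each fiber directly, but needs the Kan extension formula and a cofinality argument. Yours avoids both. It uses only the universal properties of $q_*$, of colimits and of straightening, plus a fiberwise limit formula for mapping spaces. It also makes visible that the theorem says exponentials in $Fun(B,\S)\simeq\S_{/B}$ are computed fiberwise when $B$ is an $\i$-groupoid.
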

\begin{proof}
    Under the straightening-unstraightening equivalence, we have two adjunction pairs, shown at once in the commutative diagram below:\begin{equation}
        \begin{tikzcd}
\S_{/B} \arrow[r, "\simeq"] \arrow[d, "q^*\dashv q_*"'] & {Fun(B,\S)} \arrow[d, "q_!\dashv Ran_q"] \\
\S_{/E} \arrow[r, "\simeq"]                             & {Fun(E,\S)}                             
\end{tikzcd}.
    \end{equation}
    Here $q_!$ is the precomposition with $q$, which by \cref{repeat1} and the fact that pullbacks in $\S$ preserve colimit, corresponds to $q^*$ in the commutative diagram. It then follows that the right Kan extension $Ran_q$ along $q$ completes the other commutative diagram for $q_*$. It suffices to prove the equivalence in $Fun(B,\S)$. Note that $St_E(X\times E)$ is a diagram constant on $X$, which we just denote by $X$ for brevity. By \cite[proposition 7.3.5.1]{Kerodon}, the Kan extension is computed as \begin{equation}
        Ran_q(X)(b)\simeq\lim_{b\rightarrow q(e)}X,\ \ \ \forall b\in B.
    \end{equation}Here the limit is taken over the comma category $b\downarrow q$ with respect to $\{b\}\hookrightarrow B\xleftarrow{q}E$. Note that every morphism $b\rightarrow q(e)$ admits Cartesian lift at $e\in E$, the universal property of Cartesian morphism implies that the subcategory $p[b]\subset b\downarrow q$ is coreflective, which then by the $\i$-categorical Quillen's theorem A implies that $p[b]\hookrightarrow b\downarrow q$ is left cofinal (see \cite[example 7.2.3.8]{Kerodon}), so the limit is reduced to the one over $p[b]$:\begin{equation}
        Ran_q(X)(b)\simeq\lim_{p[b]}X\simeq Map_{\S}(p[b],X),\ \ \ \forall b\in B.
    \end{equation}
    Here we have used \cref{constdiag}. We thus conclude by passing back to $\S_{/B}$.
\end{proof}
\begin{rmk}
    Let $y:\S^{op}\rightarrow Fun(\S,\S)$ denote the coYoneda embedding, then $y\circ p^{op}$ is a diagram of representable copresheaves indexed by $B^{op}$. However as an $\i$-groupoid, $B^{op}$ is canonically equivalent to $B$ so in the statement above we just write $\colim_B$. By writing $\colim_BMap_{\S}(p[b],X)$, we have implicitly used the fact that colimits and limits of presheaves are computed pointwise. The theorem above guarantees that the following second definition, which in this paper we will work with, is equivalent to the first one. 
\end{rmk}
\begin{defn}
    An endofunctor over $\S$ is called a polynomial functor, if for some $B\in\S$, up to natural equivalence it can be written as a colimit of representable copresheaves indexed by $B$. 
\end{defn}
\begin{rmk}
    If $p\in Fun(\S,\S)$ is a polynomial functor, $p$ determines the corresponding diagram $B\rightarrow\S$ up to equivalence. In fact, by \cref{constdiag}, $B\simeq p(*)$ is the evaluation of $p$ at the final object. We call $B$ the \textbf{position space} of $p$. The uniqueness of the functor $B\rightarrow\S$ is a consequence of Yoneda lemma, which will be clear after we discuss the mapping spaces in the next subsection.
\end{rmk}
We summarize the situation by pointing out that, when speaking of polynomial functors over $\S$, there are two distinct but equivalent pictures to bear in mind:\begin{itemize}
    \item The \textit{bundle picture}. The data of a polynomial functor is given by a morphism $E\xrightarrow{p}B$ in $\S$. Up to a fibrant replacement, we may always assume $p$ to be a Kan fibration, whence the terminology ``bundle picture''. We occasionally refer to $E$ and $B$ as ``total space'' and ``base space''. For $b\in B$, we call $p^{-1}(b):=\{b\}\times_BE$ the \textbf{direction space} of $p$ at $b$.
    \item The \textit{diagram picture}. The data of a polynomial functor is given by a functor $B\xrightarrow{p}\S$. The evaluation $p(b)$ is the direction space at $b\in B$. To avoid confusion with the case in the previous picture, we will always write $p[b]$ for the direction space.
\end{itemize}
The fact that we can go back and forth between these two pictures is rather trivial in $Poly$, when $E$ and $B$ are sets. In the case of $\i$-groupoids, the equivalence relies on the straightening-unstraightening equivalence as a non-trivial black box.

\subsection{Mapping spaces}
\begin{ntn}
    The full subcategory of the functor $\i$-category $Fun(\S,\S)$ spanned by all polynomial functors is denoted as $\P$.
\end{ntn}
We want to give a characterization of the mapping space between two polynomial functors, and describe the corresponding actions on positions and directions.
\begin{lem}
    For $B\in\S$ and $X,Y\in\S_{/B}$, denote the fibers at $b\in B$ by $X_b$ and $Y_b$ respectively, then \begin{equation}
     Map_{\S_{/B}}(X,Y)\simeq\lim_BMap_{\S}(X_b,Y_b).
    \end{equation}
\end{lem}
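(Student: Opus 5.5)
The plan is to reduce the statement to the computation of limits in \cref{repeat2}, applied to a suitable internal-hom object in $\S_{/B}$. Since $\S_{/B}$ is locally Cartesian closed, and in fact Cartesian closed as an $\i$-category, there is an internal hom $\underline{Hom}_B(X,Y)\in\S_{/B}$. It is characterized by
\begin{equation*}
Map_{\S_{/B}}(Z\times_B X,Y)\simeq Map_{\S_{/B}}(Z,\underline{Hom}_B(X,Y)).
\end{equation*}
Taking $Z=B$, the terminal object of $\S_{/B}$, gives
\begin{equation*}
Map_{\S_{/B}}(X,Y)\simeq Map_{\S_{/B}}(B,\underline{Hom}_B(X,Y)),
\end{equation*}
which is the space of sections of $\underline{Hom}_B(X,Y)\to B$. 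By \cref{repeat2}, this equals $\lim_B St_B(\underline{Hom}_B(X,Y))$. So it remains to identify the fibers: I must show that the fiber of $\underline{Hom}_B(X,Y)$ at $b$ is $Map_\S(X_b,Y_b)$.

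For the fiber computation I will work in the diagram picture. Under $St_B$, the category $\S_{/B}$ is identified with $Fun(B,\S)$, and fiber products over $B$ go to pointwise products. This holds because pullback along $\{b\}\to B$ preserves limits, so $St_B$ preserves products. It follows that the internal hom in $Fun(B,\S)$ is computed pointwise:
\begin{equation*}
\underline{Hom}(F,G)(b)\simeq Map_\S(F(b),G(b)).
\end{equation*}
To justify this, take the candidate $H(b)=Map_\S(F(b),G(b))$, made functorial using that $B$ is a groupoid. A morphism $b\to b'$ is invertible, so it acts by conjugation: precompose with the inverse on $F$ and postcompose on $G$. Formally, $H$ is the composite $B\simeq B^{op}\times_{B}B\xrightarrow{F^{op}\times G}\S^{op}\times\S\xrightarrow{Map}\S$ along the diagonal. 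One then checks the adjunction $Map(K\times F,G)\simeq Map(K,H)$ naturally in $K$, using $\S$-valued ends:
\begin{equation*}
Map_{Fun(B,\S)}(K,G)\simeq\int_b Map(K(b),G(b)).
\end{equation*}
Alternatively, one can just apply the end formula directly: $Map_{Fun(B,\S)}(X,Y)\simeq\int_{b\in B}Map_\S(X_b,Y_b)$. Since $B$ is an $\i$-groupoid, the twisted arrow category of $B$ is equivalent to $B$ itself, so the end reduces to $\lim_B$ of the functor $b\mapsto Map_\S(X_b,Y_b)$.

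I expect the main obstacle to be making this functoriality coherent, i.e.\ exhibiting $b\mapsto Map_\S(X_b,Y_b)$ as an honest functor $B\to\S$ and identifying its limit with the mapping space. I will take the end route first and cite the standard formula for mapping spaces in functor $\i$-categories as an end, $Map_{Fun(B,\S)}(F,G)\simeq\lim_{Tw(B)}Map_\S(F(-),G(-))$. Then I will use that $Tw(B)\to B^{op}\times B$ is the diagonal up to equivalence when $B$ is a groupoid, since $Tw(B)\simeq B$. Finally, transporting along $St_B$ gives the statement, because $St_B(X)(b)\simeq X_b$.
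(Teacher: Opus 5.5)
Your argument is correct, but it takes a different route from the paper. The paper uses neither internal homs nor ends. It writes both sides as fibers of arrow categories, $Map_\S(X_b,Y_b)\simeq \{X_b\}\times_\S Fun(\Delta^1,\S)\times_\S\{Y_b\}$ and $Map_{Fun(B,\S)}(X,Y)\simeq \{X\}\times_{Fun(B,\S)}Fun(B\times\Delta^1,\S)\times_{Fun(B,\S)}\{Y\}$. It then commutes the limit over $B$ with the limit over $\Lambda^2_2\coprod_{1\sim0}\Lambda^2_2$. This implicitly uses that, for an $\i$-groupoid $B$, $Fun(B,-)$ is the limit over $B$ of the constant diagram in $\Cat_\i$. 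In the paper, the functor $b\mapsto Map_\S(X_b,Y_b)$ arises only implicitly, from the $B$-indexed diagram of cospans.

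Your route cites a heavier result: the end formula $Map_{Fun(B,\S)}(X,Y)\simeq\lim_{Tw(B)}Map_\S(X(-),Y(-))$, combined with $Tw(B)\simeq B$. In exchange it buys two things. First, it makes the functoriality of $b\mapsto Map_\S(X_b,Y_b)$ explicit, as restriction along $B\simeq Tw(B)\to B^{op}\times B$. This matches how the paper later builds the functor $m$ using $B^\prime\simeq(B^\prime)^{op}$. Second, it pinpoints exactly where groupoid-ness enters, namely the collapse of the end to an ordinary limit. The paper's argument is more elementary: only the definition of mapping spaces and the commutation of limits with limits.

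The internal-hom detour in your first paragraph is unnecessary, since the end formula alone yields the statement. Taken by itself, the internal-hom route would only identify the fibers pointwise, e.g.\ $b^*\underline{Hom}_B(X,Y)\simeq Map_\S(X_b,Y_b)$ via Frobenius reciprocity. It would not identify them as functors on $B$, which is exactly the coherence issue you flagged.
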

\begin{proof}
    This is a variant of \cref{repeat2}. By abuse of notation we view $X,Y$ as in $Fun(B,\S)$. Recall the definition of $\i$-categorical mapping space:\begin{equation}
        Map_{\S}(X_b,Y_b)\simeq X_b\times_{\S}Fun(\Delta^1,\S)\times_{\S}Y_b.
    \end{equation}Note that this is a limit over $\Lambda^2_2\coprod_{1\sim0}\Lambda^2_2$. On the other hand,\begin{equation}
    Map_{Fun(B,\S)}(X,Y)\simeq X\times_{Fun(B,\S)}Fun(B\times\Delta^1,\S)\times_{Fun(B,\S)}Y.
    \end{equation}We conclude by the interchangeability of a double limit over $B\times(\Lambda^2_2\coprod_{1\sim0}\Lambda^2_2)$.
\end{proof}

Let $p,p^\prime$ be two polynomial functors with position spaces $B$ and $B^\prime$ respectively. Let $m$ be the functor defined as\begin{equation}
    B^\prime\times B\simeq(B^{\prime})^{op}\times B\xrightarrow{(p^\prime)^{op}\times p}\S^{op}\times\S\xrightarrow{Map_{\S}}\S,\ (b^\prime,b)\mapsto Map_{\S}(p^\prime[b^\prime],p[b]).
\end{equation}
\begin{thm}
    Consider $X\xrightarrow{Un(m)}B^\prime\times B\xrightarrow{proj_B}B$, then the mapping space is equivalent to the space of sections of $X$ over $B$:\begin{equation}
        Map_{\P}(p,p^\prime)\simeq Map_{\S_{/B}}(B,X).
    \end{equation}
\end{thm}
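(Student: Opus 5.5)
Write $p\simeq\colim_{b\in B}y^{p[b]}$ and $p^\prime\simeq\colim_{b^\prime\in B^\prime}y^{p^\prime[b^\prime]}$, where $y^{A}:=Map_{\S}(A,-)$ is the corepresentable copresheaf. Since $\P$ is a full subcategory of $Fun(\S,\S)$, we may compute $Map_{\P}(p,p^\prime)$ in the functor $\i$-category. The plan is to reduce everything to the double (co)limit formula \cref{mapspace}. The steps are as follows.

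First, the colimit in the source variable becomes a limit:
\begin{equation*}
Map_{Fun(\S,\S)}\Bigl(\colim_{b\in B}y^{p[b]},p^\prime\Bigr)\simeq\lim_{b\in B}Map_{Fun(\S,\S)}(y^{p[b]},p^\prime).
\end{equation*}
Second, the Yoneda lemma for copresheaves (with $B^{op}\simeq B$ as in the remark above) gives
\begin{equation*}
Map_{Fun(\S,\S)}(y^{p[b]},p^\prime)\simeq p^\prime(p[b])\simeq\colim_{b^\prime\in B^\prime}Map_{\S}(p^\prime[b^\prime],p[b]),
\end{equation*}
because colimits in $Fun(\S,\S)$ are computed pointwise. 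This equivalence must be natural in $b$. That naturality follows because the Yoneda equivalence $Map(y^{A},F)\simeq F(A)$ is natural in $A\in\S^{op}$; precomposing with $p^{op}:B^{op}\to\S^{op}$ gives a natural equivalence of functors $B\to\S$.

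Third, combining the first two steps gives $Map_{\P}(p,p^\prime)\simeq\lim_{b\in B}\colim_{b^\prime\in B^\prime}m(b^\prime,b)$. Applying \cref{mapspace}, with the roles of the two factors arranged so that the limit is over $B$ and the colimit over $B^\prime$, identifies this with $Map_{\S_{/B}}(B,X)$. Here $X=Un(m)$ is viewed over $B$ via $proj_B$. The reordering $B^\prime\times B\simeq B\times B^\prime$ is harmless for unstraightening, since it is an equivalence of bases.

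The only delicate point is the naturality in the second step. We need the equivalences $p^\prime(p[b])\simeq\colim_{b^\prime}m(b^\prime,b)$ to assemble into an equivalence of diagrams indexed by $B$, not merely an objectwise one. I would handle this by working with the whole functor $B\to\S$ given by $b\mapsto Map(y^{p[b]},p^\prime)$: it is the composite of the Yoneda embedding with $p^{op}$. The pointwise colimit formula is itself a natural equivalence of functors $\S\to\S$, which we then restrict along $p$.

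As a byproduct, this description yields the data $\varphi_1$ and $\varphi^\sharp$: a section over $B$ picks, for each $b$, a point $\varphi_1(b)\in B^\prime$ together with a map $\varphi^\sharp_b:p^\prime[\varphi_1(b)]\to p[b]$.
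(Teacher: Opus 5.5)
Your proposal is correct and is essentially the paper's own proof: the universal property of the colimit in the source, then the $\infty$-categorical Yoneda lemma together with the pointwise formula $p^\prime(p[b])\simeq\colim_{B^\prime}Map_{\S}(p^\prime[b^\prime],p[b])$, and finally \cref{mapspace}. Your discussion of naturality in $b$ only makes explicit what the paper leaves implicit; strictly, the Yoneda equivalence $Map(y^{A},F)\simeq F(A)$ is natural in $A\in\S$ and one precomposes with $p:B\to\S$, not $p^{op}$, but since $B\simeq B^{op}$ this variance slip is harmless.
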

\begin{proof}
    Since $p=\colim_{B}Map_{\S}(p[b],-)$, by the universal property of colimit:\begin{equation}
    Map_{\P}(p,p^\prime)\simeq\lim_BMap_{Fun(\S,\S)}(Map_{\S}(p[b],-),p^\prime).
    \end{equation}By the $\i$-categorical Yoneda lemma, we then have\begin{equation}
    Map_{\P}(p,p^\prime)\simeq\lim_Bp^\prime(p[b])\simeq\lim_B\colim_{B^\prime}Map_{\S}(p^\prime[b^\prime],p[b]).
    \end{equation}Now the result immediately follows from \cref{mapspace}.
\end{proof}
\begin{rmk}
    The proof above does not rely on the previous lemma. However, we want to pin down what is a point in $Map_{\S_{/B}}(B,X)$ in more detail. Recall in the proof of \cref{mapspace}, in the right diagram of \cref{bigdiag}, $Map_{\S_{/B}}(B,X)$ is equipped with a fibration into $Map_{\S}(B,B^\prime)$, whose fiber at a point $\varphi_1$ is $\lim_BMap_{\S}(p^\prime[\varphi_1(b)],p[b])$. For a morphism $\varphi:p\rightarrow p^\prime$, as a natural transformation, the evaluation at $*\in\S$ yields the corresponding morphism $\varphi_1:B\rightarrow B^\prime$ on positions. On directions, by the previous lemma, the fiber at $\varphi_1$ is exactly $Map_{\S_{/B}}(E^\prime\times_{B^\prime}B,E)$, in which a point $\varphi^\sharp$ can be viewed as a family of morphisms $\varphi^\sharp_b:p^\prime[\varphi_1(b)]\rightarrow p[b]$ indexed by $b\in B$. In summary, the data of a morphism $\varphi$ in $\P$ consists of $\varphi_1$ and $\varphi^\sharp$ making the diagram below commute \begin{equation}
        \begin{tikzcd}
			E \arrow[rd, "p"'] & E^\prime\times_{B^\prime}B \arrow[d] \arrow[r] \arrow[l, "\varphi^\sharp"'] & E^\prime \arrow[d, "p^\prime"] \\
			& B \arrow[r, "\varphi_1"]                                                    & B^\prime                      
		\end{tikzcd}.
    \end{equation}It is clear how morphisms compose: we have \begin{equation}\label{ssssss}
        (\psi\circ\varphi)_1\simeq\psi_1\circ\varphi_1,\ \ \ (\psi\circ\varphi)^\sharp\simeq\varphi_1^*(\psi^\sharp).
    \end{equation}Moreover, in a more concise way, the mapping space $Map_{\P}(p,p^\prime)$ should be identified with the solution space of the following horn filling problem in $\Cat_{\i}$ viewed as an $(\i,2)$-category: \begin{equation}\label{inftytwo}
		\begin{tikzcd}
			& B^\prime \arrow[rd, "p^\prime"] \arrow[d, "\varphi^\sharp", Rightarrow] &             &  & \Lambda_2^2 \arrow[d, hook] \arrow[r, "{(p,p^\prime)}"] & \Cat_\infty \\
			B \arrow[rr, "p"'] \arrow[ru, "\varphi_1"] & {}    & \mathcal{S} &  & \Delta^2 \arrow[ru, dashed]           &     
		\end{tikzcd}.
	\end{equation}The author could not make it precise though, due to the lack of expertise in $(\i,2)$-categories.
\end{rmk}
\begin{ex}\label{onlyexl}
    For any $B\in\S$, the constant functor $\S\rightarrow\S$ on $B$ is a polynomial functor whose corresponding diagram is $B\rightarrow\S$ constant on the initial object $\varnothing$. This embeds $\S$ as a full subcategory of $\P$. Moreover, the inclusion admits a left adjoint $\P\rightarrow\S$ which is the copresheaf induced by the identity functor $id_{\S}$: actually we have \begin{equation}
        Map_{\P}(id_{\S},p)\simeq B,\ \ \ \forall(B\xrightarrow{p}\S)\in\P.
    \end{equation}Similarly by unwinding the definition, the presheaf induced by $id_{\S}$ is the global section functor:\begin{equation}
        Map_{\P}(p,id_{\S})\simeq Map_{\S_{/B}}(B,E)\simeq\lim(p),\ \ \ \forall(E\xrightarrow{p}B)\in\P.
    \end{equation}
\end{ex}

\section{Completeness and cocompleteness}
\subsection{Limits}
In the case of $Poly$, it is shown in \cite[theorem 5.33]{niu2023polynomial} that any (small) limit of polynomials is again a polynomial, whose position space (resp. direction space) is the limit (resp. colimit) of the corresponding position spaces (resp. direction spaces). The following theorem is a counterpart in $\P$. 
\begin{thm}
    The $\i$-category $\P$ is complete.
\end{thm}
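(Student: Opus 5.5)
Since $\P$ is a full subcategory of $Fun(\S,\S)$, which is complete with limits computed pointwise, the plan is to show that $\P$ is closed under small limits in $Fun(\S,\S)$. Let $I$ be a small $\i$-category and $F:I\rightarrow\P$, $i\mapsto p_i=\colim_{b\in B_i}Map_{\S}(p_i[b],-)$, a diagram. Then $L:=\lim_I F$ in $Fun(\S,\S)$ is given by $L(X)\simeq\lim_{i\in I}p_i(X)$, and it suffices to show that $L$ is a polynomial functor. The candidate position space is $B:=\lim_I B_i\simeq L(*)$, which is again an $\i$-groupoid. For $\beta=(b_i)_i\in B$, the candidate direction space is $p[\beta]:=\colim_{i\in I^{op}}p_i[b_i]$, where the transition maps $p_j[b_j]\rightarrow p_i[b_i]$ for $i\rightarrow j$ are the direction maps $F(i\rightarrow j)^\sharp$ described in the remark after the theorem on mapping spaces.

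Rather than assembling $p[-]$ by hand, I will argue fiberwise over $B$. For each $X$ there is the canonical map $L(X)\rightarrow L(*)=B$, induced by $X\rightarrow *$. This gives an equivalence $L(X)\simeq\colim_{\beta\in B}F_\beta(X)$, natural in $X$, where $F_\beta(X)$ is the fiber over $\beta$. The key computation is the fiber itself. Since limits commute with limits, $F_\beta(X)\simeq\lim_{i}\big(\{b_i\}\times_{B_i}p_i(X)\big)$. By \cref{repeat1} applied to $p_i(X)=\colim_{B_i}Map_{\S}(p_i[b],X)\rightarrow B_i$, each fiber is $Map_{\S}(p_i[b_i],X)$. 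Hence
\begin{equation}
F_\beta(X)\simeq\lim_{i\in I}Map_{\S}(p_i[b_i],X)\simeq Map_{\S}\big(\colim_{i\in I^{op}}p_i[b_i],X\big),
\end{equation}
which is representable. Assembling over $\beta$, $L\simeq\colim_{\beta\in B}Map_{\S}(p[\beta],-)$, so $L$ is polynomial.

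The main obstacle is making this fiberwise step coherent. The functor $\beta\mapsto F_\beta$ must be an honest diagram $B\rightarrow Fun(\S,\S)$ of representables, natural in $X$. Equivalently, $\beta\mapsto p[\beta]$ must be a functor $B\rightarrow\S$, not just a pointwise assignment. To handle this I will pass to the bundle picture, where $p_i$ corresponds to $E_i\rightarrow B_i$. The natural transformation $L(X)\rightarrow B$ is the unstraightening of a functor $B\rightarrow Fun(\S,\S)$, obtained by straightening over $B$ naturally in $X$, i.e.\ by straightening $Fun(\S,\S)$-valued families. Corepresentability can then be checked pointwise, since the coYoneda embedding is fully faithful and a functor into $Fun(\S,\S)$ that lands pointwise in its essential image factors through $\S^{op}$. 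This yields $p:B\rightarrow\S$ with $p[\beta]$ as above, and the identification is natural in $X$ because each step (pointwise limits, pullback along $X\rightarrow *$, and \cref{repeat1}) is functorial.

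As a sanity check, the answer matches the $Poly$ case: positions are the limit of positions and directions are the colimit of directions. For a product $\prod_i p_i$ it gives $B=\prod B_i$ and $p[\beta]=\coprod_i p_i[b_i]$. For the terminal object (empty $I$) it gives the polynomial with one position and empty directions, i.e.\ the constant functor on $*$.
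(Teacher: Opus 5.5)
Your proposal is correct and follows essentially the same route as the paper. Both arguments take the limit pointwise in $Fun(\S,\S)$ and use $B=\lim_I B_i$ as the position space. Both then identify the fiber over $\beta$ by interchanging limits, as $\lim_I Map_{\S}(p_i[b_i],X)\simeq Map_{\S}(\colim p_i[b_i],X)$, and reassemble the result by straightening over $B$. Your additional remarks make explicit what the paper's phrase ``natural in $b\in B$ and $X\in\S$'' leaves implicit: the coherence is handled by straightening $Fun(\S,\S)$-valued families and using full faithfulness of the coYoneda embedding, and the colimit is correctly indexed by $I^{op}$.
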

\begin{proof}
    Let $I$ be a simplicial set and $p_i:B_i\rightarrow\S$ be a family of polynomial functors indexed by $i\in I$. Take the limit $p:=\lim_Ip_i$ in $Fun(\S,\S)$, it suffices to prove that $p$ is a polynomial functor. If this were the case, since limits in $Fun(\S,\S)$ are computed pointwise, the position space of $p$ has to be \begin{equation}
        p(*)\simeq\lim_Ip_i(*)\simeq\lim_IB_i:=B.
    \end{equation} Let $q_i:B\rightarrow B_i$ ($i\in I$) be the structure morphisms in the limit cone. For any $b\in B$ and $X\in\S$, we have \begin{equation}
        Map_{\S}(p_i[q_i(b)],X)\simeq\{q_i(b)\}\times_{B_i}p_i(X),\ \ \ \forall i\in I.
    \end{equation}Note that $*\xrightarrow{q_i(b)}B_i\xleftarrow{p_i(X\rightarrow*)}p_i(X)$ can be viewed as a diagram $I\times\Lambda^2_2\rightarrow\S$, by the interchangeability of the double limit here, we have an equivalence \begin{equation}
        \{b\}\times_Bp(X)\simeq\lim_IMap_{\S}(p_i[q_i(b)],X)\simeq Map_{\S}(\colim_Ip_i[q_i(b)],X)
    \end{equation} natural in $b\in B$ and $X\in\S$. We then conclude by the straightening-unstraightening equivalence that \begin{equation}
        p\simeq\colim_BMap_{\S}(\colim_Ip_i[q_i(b)],-)
    \end{equation} hence $p$ is a polynomial functor.
\end{proof}
\begin{ex}
For two polynomial functors $p_i:B_i\rightarrow\S$ ($i=1,2$), their categorical product is given by \begin{equation}
    B_1\times B_2\xrightarrow{p_1\times p_2}\S\times\S\xrightarrow{\coprod}\S.
\end{equation} So on positions it takes the product, on directions it takes the coproduct.
\end{ex}
\begin{war}
    By the way we construct limit above, we see that the inclusion $\P\hookrightarrow Fun(\S,\S)$ is limit-preserving. In other words, limits in $\P$ are also computed pointwise, which can be seen as a consequence of Yoneda lemma:\begin{equation}
        (\varprojlim_Ip_i)(X)\simeq\varprojlim_Ip_i(X).
    \end{equation} However, this is no longer true for the colimits: in general \begin{equation}
        (\varinjlim_Ip_i)(X)\not\simeq\varinjlim_Ip_i(X)
    \end{equation} in $\P$, and the inclusion $\P\hookrightarrow Fun(\S,\S)$ does not preserve colimit (otherwise all endofunctors would be polynomials). This asymmetry can be thought as a reason why we treat the cases of limits and colimits so differently, and the latter case is much trickier, as we will soon see in the next subsection.
\end{war}

\subsection{Colimits}
For a colimit of polynomial functors, it is true that its position space is the colimit of position spaces (as we saw in \cref{onlyexl} that the functor $\P\rightarrow\S$ taking the position space is a left adjoint). However, we can no longer use the strategy of first taking the colimit in $Fun(\S,\S)$, nor can we describe the direction spaces very explicitly in general. We need to invoke \cite[remark 7.6.6.10]{Kerodon}, which reduces the goal of showing cocompleteness into showing the existences of small coproducts and coequalizers. The first part is relatively easy, and we have a stronger result below.
\begin{thm}\label{justlab}
    The colimit of polynomial functors indexed by an $\i$-groupoid is still a polynomial functor.
\end{thm}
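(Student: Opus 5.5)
Let $A\in\S$ be an $\i$-groupoid and $a\mapsto p_a$ a diagram $A\rightarrow\P$, where each $p_a$ is given by $p_a:B_a\rightarrow\S$. We will build a candidate polynomial $q$ directly and check its universal property using the mapping-space theorem of the previous section. Composing with the equivalence between polynomial functors and diagrams $B\rightarrow\S$ (modulo the contravariance in the directions), a family indexed by $A$ amounts to a functor $A\rightarrow\S_{/\S}$ up to the variance issue. Since $A$ is a groupoid, $A\simeq A^{op}$, and a morphism in the groupoid direction is invertible, so we can transport the family along $(\varphi^\sharp)^{-1}$. Concretely, we therefore expect a functor $f:A\rightarrow\S_{/\S}$ with $f(a)=(B_a\xrightarrow{p_a}\S)$. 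By the discussion before \cref{sovers}, $\S_{/\S}$ is cocomplete, and the colimit is $q:E\rightarrow\S$ with $E:=\colim_AB_a$ and $q|_{B_a}\simeq p_a$. Our candidate is the polynomial $q$, i.e.\ $\colim_E Map_\S(q[e],-)$.

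To verify the universal property, fix a polynomial $r:C\rightarrow\S$. By the universal property of the colimit defining $q$ and the Yoneda computation in the proof of the mapping-space theorem,
\[
Map_{\P}(q,r)\simeq\lim_E r(q[e])\simeq\lim_A\lim_{B_a}r(p_a[b]),
\]
where the second step is the limit analogue of \cref{dbcolim1}, obtained from $E=\colim_AB_a$ and the fact that limits over a colimit of index groupoids decompose; this dualizes \cref{sovers}. In the same way, $\lim_{B_a}r(p_a[b])\simeq Map_{\P}(p_a,r)$. Hence $Map_\P(q,r)\simeq\lim_A Map_\P(p_a,r)$. It remains to check that this equivalence is induced by canonical cocone maps $p_a\rightarrow q$, which are the inclusions $B_a\rightarrow E$ on positions with identity on directions, and that the equivalences are natural in $r$.

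The main obstacle is the first step: making rigorous that a diagram $A\rightarrow\P$ really corresponds to a diagram $A\rightarrow\S_{/\S}$. Morphisms in $\P$ go covariantly on positions but contravariantly on directions, so $\P$ is not $\S_{/\S}$. The plan is to restrict to the maximal subgroupoid. Using the description of morphisms via $(\varphi_1,\varphi^\sharp)$ and \cref{ssssss}, an equivalence in $\P$ has $\varphi^\sharp$ invertible. So $\P^{\simeq}\simeq(\S_{/\S})^{\simeq}$, with the equivalence sending $\varphi$ to $(\varphi_1,(\varphi^\sharp)^{-1})$, and any functor from the groupoid $A$ factors through the core. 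Making this identification of cores coherent rather than only on objects and morphisms is the delicate part. One could instead phrase it through \cref{inftytwo}, or define the cocone maps directly and verify the universal property only on mapping spaces as above, which requires only the pointwise identifications together with naturality in $a$ given by transport.
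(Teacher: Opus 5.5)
Your proof is correct at the paper's level of rigor. It uses the same candidate as the paper, namely the colimit $q:E=\colim_A B_a\rightarrow\S$ formed in $\S_{/\S}$; what differs is how you certify it. The paper computes $\colim_A p_a$ pointwise in $Fun(\S,\S)$ and contracts $\colim_A\colim_{B_a}Map_{\S}(p_a[b],X)$ to $\colim_{e\in E}Map_{\S}(q[e],X)$ via \cref{dbcolim1}. You instead verify the universal property by mapping out, using the dual decomposition $\lim_E\simeq\lim_A\lim_{B_a}$.

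The paper's route directly shows that the colimit formed in $Fun(\S,\S)$ is already polynomial, i.e.\ that $\P\hookrightarrow Fun(\S,\S)$ preserves $\i$-groupoid-indexed colimits. That is the form invoked later, e.g.\ to see that $Lan_p(p_1)$ is polynomial in the coclosure theorem. Since you test only against polynomial $r$, you a priori get only a colimit in the full subcategory $\P$. However, nothing in your chain of equivalences uses that $r$ is polynomial, so letting $r$ range over all of $Fun(\S,\S)$ gives the stronger statement at no extra cost.

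The passage from a diagram $A\rightarrow\P$ to a diagram $A\rightarrow\S_{/\S}$ is only asserted in a footnote in the paper. Your factorization through $\P^{\simeq}\simeq(\S_{/\S})^{\simeq}$ is therefore more careful than the source.

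You can also avoid that coherence issue altogether:
\begin{enumerate}
\item Take $G=\colim_A p_a$ in $Fun(\S,\S)$ and straighten $G(-)\rightarrow G(*)\simeq E$ to a functor $\hat G:E\rightarrow Fun(\S,\S)$ with $\colim_E\hat G\simeq G$.
\item Because $A$ is a groupoid, the transformation $(a\mapsto p_a(X))\Rightarrow(a\mapsto B_a)$ is cartesian. Descent in $\S$ then gives $p_a(X)\simeq B_a\times_E G(X)$ naturally in $X$.
\item Hence $\hat G(\iota_a(b))\simeq Map_{\S}(p_a[b],-)$, where $\iota_a:B_a\rightarrow E$ are the cocone maps.
\item Corepresentability is a property and the Yoneda embedding is fully faithful, so $\hat G$ factors through $\S^{op}$.
\end{enumerate}
It follows that $G$ is polynomial without ever lifting the diagram to $\S_{/\S}$.
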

\begin{proof}
    Let $B\in\S$ and $p_i:B_i\rightarrow\S$ be a family of polynomial functors indexed by $i\in B$. In this case, we can first take the colimit $\colim_Bp_i$ in $Fun(\S,\S)$, then prove that it is a polynomial. To this end, we denote $E:=\colim_BB_i$ and let $E\xrightarrow{p}\S$ be the colimit of $B_i\xrightarrow{p_i}\S$ in $\S_{/\S}$\footnote{It is generally not true, if $B$ is just a simplicial set, that a diagram $B\rightarrow\P$ would induce a diagram $B\rightarrow\S_{/\S}$, due to the $(\i,2)$-categorical issue indicated in \cref{inftytwo}. However, it is the case, when $B$ is an $(\infty,0)$-category.}. We then simply use \cref{dbcolim1}:\begin{equation}
    (\colim_Bp_i)(X)\simeq\colim_B\colim_{B_i}Map_{\S}(p_i[b_i],X)\simeq\colim_{e\in E}Map_{\S}(p(e),X),\ \forall X\in\S.
    \end{equation}We thus conclude that the colimit is the polynomial functor defined by the diagram $E\xrightarrow{p}\S$.
\end{proof}
\begin{cor}
    The $\i$-category $\P$ admits all small coproducts.
\end{cor}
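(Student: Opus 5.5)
The corollary should follow directly from \cref{justlab}. A small coproduct is simply a colimit indexed by a small set $B$, regarded as a discrete $\i$-groupoid. So the plan is to take a set-indexed family $\{p_i\}_{i\in B}$ of polynomial functors, view $B$ as an object of $\S$, and apply \cref{justlab}. That theorem says the colimit of the family in $Fun(\S,\S)$, namely the pointwise coproduct $X\mapsto\coprod_{i\in B}p_i(X)$, is again a polynomial functor. Concretely, it is the polynomial whose position space is $E=\coprod_{i\in B}B_i$ and whose directions are given by the induced map $E\rightarrow\S$, restricting to $p_i$ on the summand $B_i$.

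It then remains to check that this object is the coproduct in $\P$ and not merely in $Fun(\S,\S)$. Since $\P$ is a full subcategory of $Fun(\S,\S)$, this is immediate: for any polynomial $q$,
\begin{equation*}
Map_{\P}(\colim_Bp_i,q)\simeq Map_{Fun(\S,\S)}(\colim_Bp_i,q)\simeq\lim_BMap_{Fun(\S,\S)}(p_i,q)\simeq\lim_BMap_{\P}(p_i,q).
\end{equation*}
Hence any colimit computed in $Fun(\S,\S)$ that happens to lie in $\P$ is also a colimit in $\P$.

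The only point needing care is the footnote in the proof of \cref{justlab}. A diagram $B\rightarrow\P$ must induce a diagram $B\rightarrow\S_{/\S}$. For a discrete $B$ this poses no difficulty: a diagram indexed by a set is just a family of objects, with no coherence data to track. So there is no real obstacle, and the corollary is the special case of \cref{justlab} in which $B$ is discrete.
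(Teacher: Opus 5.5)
Your proposal is correct and is essentially the paper's argument: the corollary is obtained by specializing \cref{justlab} to a set viewed as a discrete $\i$-groupoid, and the paper simply records the resulting coproduct in the bundle picture as $\coprod_J E_j\rightarrow\coprod_J B_j$. Your extra observation, that a colimit computed in $Fun(\S,\S)$ which lands in the full subcategory $\P$ is also a colimit in $\P$, makes explicit a step the paper leaves implicit.
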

\begin{proof}
    This is an immediate consequence of the theorem above, by viewing a set $J$ as a discrete space. In fact, in the bundle picture, the coproduct of $E_j\xrightarrow{p_j}B_j$ ($j\in J$) is just $\coprod_JE_j\xrightarrow{\coprod_Jp_j}\coprod_JB_j$.
\end{proof}

Next, for the hard part, the existence of coequalizers, we will need to invoke explicit point-set models of homotopy limit and homotopy colimit in $\S$ in the following arguments.
\begin{thm}
    For any pair of parallel morphisms $f,g:p_1\rightrightarrows p_2$ where $(E_i\xrightarrow{p_i}B_i)\in\P$ ($i=1,2$), their coequalizer exists.
\end{thm}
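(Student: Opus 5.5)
The plan is to imitate the $Poly$ construction. There, positions of the coequalizer are the coequalizer of positions, and the direction set at a class is a \emph{limit} of the direction sets in that class. Because $f^\sharp$ and $g^\sharp$ are not invertible, the directions cannot be obtained by descent along $B_2\to\colim(B_1\rightrightarrows B_2)$. Instead I would first build a non-groupoidal indexing $\i$-category, and then right Kan extend along its groupoidification.

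\textbf{Step 1: the indexing category and the diagram.} Let $\Lambda$ be the walking cospan $0\rightarrow m\leftarrow 1$. Define $C\in\Cat_\i$ as the pushout of
\[ B_1\times\Lambda\;\hookleftarrow\; B_1\times\{0,1\}\;\xrightarrow{(f_1,g_1)}\; B_2 .\]
So $C$ contains $B_2$ and $B_1$ (sitting at $m$), with arrows $f_1(b_1)\to b_1\leftarrow g_1(b_1)$. The data $p_1,p_2,f^\sharp,g^\sharp$ assemble into a functor $P:C^{op}\to\S$. It restricts to $p_2$ on $B_2$ and to $p_1$ on $B_1$, and sends the two arrows to $f^\sharp_{b_1}:p_2[f_1b_1]\to p_1[b_1]$ and $g^\sharp_{b_1}:p_2[g_1b_1]\to p_1[b_1]$.

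Making $P$ honestly functorial is the $(\i,2)$-issue flagged in \cref{inftytwo}. I would circumvent it by working in the bundle picture: the spans $E_1\leftarrow E_2\times_{B_2}B_1\to E_2$, glued along $C$, give a cocartesian-type fibration over $C^{op}$ whose straightening is $P$. Since $|\Lambda|\simeq *$, the groupoidification is $|C|\simeq\colim(B_1\rightrightarrows B_2)=:B$. This $B$ is the candidate position space, consistent with \cref{onlyexl}.

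\textbf{Step 2: the directions.} Let $\pi:C\to B$ be the localization. Define $q:B\to\S$ by
\[ q[b]:=\lim_{C_{/b}}P,\qquad C_{/b}:=C\times_B B_{/b}, \]
that is, $q=Ran_{\pi^{op}}P$. Let $Q\in\P$ be the polynomial $\colim_B Map_\S(q[b],-)$, which is a polynomial functor by definition. The maps $B_2\to C\to B$, together with the limit projections $q[\pi(b_2)]\to p_2[b_2]$, give a morphism $p_2\to Q$. The two composites $p_1\to Q$ are identified via the arrows $b_1\to f_1b_1$, $b_1\to g_1b_1$ in $C$.

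\textbf{Step 3: the universal property.} For $r\in\P$, I want
\[ Map_\P(Q,r)\simeq Map_\P(p_2,r)\times_{Map_\P(p_1,r)\times Map_\P(p_1,r)}Map_\P(p_1,r). \]
By the Yoneda argument in the mapping-space theorem, $Map_\P(p,r)\simeq\lim_{B}r(p[b])$ for any polynomial $p$. Since $C$ is a pushout of categories, limits over $C$ turn it into a pullback of limits, and $\lim_{B_1\times\Lambda}=\lim_{B_1}\lim_\Lambda$. Hence the right-hand side is $\lim_{C}r\circ P$. Transitivity of right Kan extension gives
\[ \lim_C r\circ P\simeq\lim_B\lim_{C_{/b}}r\circ P. \]
The left-hand side is $\lim_B r(\lim_{C_{/b}}P)$. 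So everything reduces to showing
\[ r\Big(\lim_{C_{/b}}P\Big)\simeq\lim_{C_{/b}}r\circ P. \]

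\textbf{Main obstacle.} This last step is where I expect the real difficulty. A polynomial $r$ does not preserve all limits, but I would show it preserves \emph{weakly contractible} limits. Indeed $r$ is the composite $\S\to\S_{/B_r}\to\S$. The first functor, $X\mapsto(b'\mapsto Map(r[b'],X))$, preserves all limits. The forgetful functor $\S_{/B_r}\to\S$ preserves limits over weakly contractible diagrams. It then remains to check that $C_{/b}$ is weakly contractible. Since $\pi$ is a localization, $|C_{/b}|\simeq B_{/b}\simeq *$; I would verify this via the fact that the realization of $C\times_B B_{/b}$ is the fiber of $|C|\to B$, i.e. of an equivalence, using universality of colimits in $\S$. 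If this direct approach fails, the fallback is an explicit point-set (simplicial) model of the limit, as the paper announces.
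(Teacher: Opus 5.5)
Your proposal is correct. Its core construction is the paper's: glue the cospans $p_2[f_1b_1]\to p_1[b_1]\leftarrow p_2[g_1b_1]$ onto $B_2$ to get a non-groupoidal indexing category, then right Kan extend the resulting direction diagram along its groupoidification. Two things differ.

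First, the paper reduces, via an interchange of colimits, to representable $p_1$, and then works with the explicit model $B_2\coprod_{0\sim x,1\sim y}\Lambda^2_2\to B_2\coprod_{0\sim x,1\sim y}[0,1]$. Your pushout $C$ is the family version of that model, so you handle arbitrary $B_1$ at once and skip the reduction.

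Second, and more substantially, the verification is different. The paper's one-line appeal to the universal property of the Kan extension is implicitly a direction-side argument. A morphism $Q\to r$ is a pair $(\psi,\psi^\sharp)$ with $\psi:|C|\to B_r$ and $\psi^\sharp:r[\psi(-)]\Rightarrow q$. The adjunction $\pi^*\dashv Ran_\pi$, together with $Map(|C|,B_r)\simeq Map_{\mathsf{Cat}_\infty}(C,B_r)$, converts this into a pair $(\chi:C\to B_r,\ r[\chi(-)]\Rightarrow P)$, which then splits along the pushout defining $C$. That route uses only the adjunction and the defining property of groupoidification. It never needs the fibres of $C\to|C|$ to be weakly contractible, nor $r$ to commute with $\varprojlim P$.

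Your route goes through the Yoneda formula $Map(Q,r)\simeq\varprojlim_B r(q[b])$, which forces you to prove both of those facts. Both are true. The functor $r$ factors as $\mathcal{S}\to\mathcal{S}_{/B_r}\to\mathcal{S}$, and the forgetful functor preserves weakly contractible limits. Universality of colimits gives $|C|\times_B\{b\}\simeq\varinjlim_{c\in C}Map_B(b,\pi c)\simeq|C\times_BB_{b/}|$, which is contractible because $|C|\to B$ is an equivalence. In exchange, your argument is explicit where the paper's is terse. Also, the lemma that polynomial functors preserve weakly contractible limits is exactly what underlies the paper's remark that the coequalizer of $y^X\rightrightarrows y^A+y^B$ is $y^{A\times^h_XB}$.

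Two small corrections. The arrows $f_1(b_1)\to b_1$ of $C$ are sent to $f^\sharp_{b_1}:p_2[f_1b_1]\to p_1[b_1]$, so $P$ must be a covariant functor $C\to\mathcal{S}$, not $C^{op}\to\mathcal{S}$; in Step 2 you also write these arrows reversed. Since $B$ is a groupoid, $C\times_BB_{/b}\simeq C\times_BB_{b/}$, so nothing downstream changes.

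The detour through bundles to avoid an $(\infty,2)$-issue is unnecessary. A single morphism $f$ already gives $f^\sharp$ as a natural transformation $p_2\circ f_1\Rightarrow p_1$ in $Fun(B_1,\mathcal{S})$, by the fibre description after the mapping-space theorem. So $P$ is defined directly through $Fun(C,\mathcal{S})\simeq Fun(B_2,\mathcal{S})\times_{Fun(B_1\times\{0,1\},\mathcal{S})}Fun(B_1\times\Lambda,\mathcal{S})$. The coherence problem the paper flags concerns diagrams of many composable morphisms, not a single parallel pair.
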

\begin{proof}
    Since $p_1$ is a colimit over $\i$-groupoid of representable copresheaves, by \cref{justlab} and the interchangeability of double colimit\footnote{The precise statement we use here is \cite[lemma 5.5.2.3]{HTT}: if one of the double colimits exists, then after interchanging, the other one also exists and coincides with the first one.}, to simplify the discussion, it suffices to deal with the case when $p_1$ itself is represented by some $X\in\S$. So we may assume that $B_1$ is trivial, and $p_1$ only has one direction space that is $X$. In this case, on positions $f_1$ and $g_1$ just pick a point $x\in B_2$ and $y\in B_2$ respectively. By the discussion at the beginning of this subsection, the position space of coequalizer must be the coequalizer of position spaces, which in this case is \begin{equation}
        B:=coeq(f_1,g_1:*\rightrightarrows B_2)\simeq B_2\coprod_{0\sim x,1\sim y}[0,1]
    \end{equation} by the explicit model of homotopy coequalizer. If the coequalizer $p:=coeq(f,g:p_1\rightrightarrows p_2)$ exists, then by Yoneda lemma $p$ is uniquely characterized by the property that $\forall(E^\prime\xrightarrow{p^\prime}B^\prime)\in\P$, there is an equivalence natural in $p^\prime$:\begin{equation}
    Map_{\P}(p,p^\prime)\simeq eq(f_!,g_!:Map_{\P}(p_2,p^\prime)\rightrightarrows Map_{\P}(p_1,p^\prime)).
    \end{equation}Denote $A_i:=Map_{\P}(p_i,p^\prime)$ ($i=1,2$) for brevity, we have the explicit model of homotopy equalizer\begin{equation}
        eq(f_!,g_!:A_2\rightrightarrows A_1)\simeq A_2\times_{A_1\times A_1}A_1^{[0,1]}.
    \end{equation}Unwinding the notions, a point in this space consists of the following data:\begin{itemize}
        \item A map $\psi:B\rightarrow B^\prime$.
        \item A morphism $\varphi\in A_2$ such that $\varphi_1:B_2\rightarrow B^\prime$ coincides with $\psi|_{B_2}$.
        \item A family of maps $h_t:p^\prime[\psi(t)]\rightarrow X$ indexed by $t\in[0,1]$, such that the two endpoints $h_0=f^\sharp\circ\varphi^\sharp_x$ and $h_1=g^\sharp\circ\varphi^\sharp_y$ (recall that $\psi(0)=\varphi_1(x)$ and $\psi(1)=\varphi_1(y)$ by the construction of $B$).
    \end{itemize} The goal is thus to find a polynomial functor $p$ with position space $B$, such that $Map_{\P}(p,p^\prime)$ is naturally equivalent to the space described above\footnote{Here it is tempting to take the total space $E$ of $p$ as gluing $E_2$ and $X\times[0,1]$ along the maps $f^\sharp:p_2[x]\rightarrow X$ and $g^\sharp:p_2[y]\rightarrow X$. However, the problem is that such $E\rightarrow B$ is not a fibration, and even after replacing it by the mapping path fibration, this still gives the wrong $p$. A modification of this idea leads to the construction that follows.}. Consider the simplicial set $B_2\coprod_{0\sim x,1\sim y}\Lambda_2^2$. We define a diagram $F$ over it in $\S$ by \begin{itemize}
        \item On $B_2$, $F|_{B_2}$ is given by $p_2:B_2\rightarrow\S$.
        \item On $\Lambda_2^2$, $F|_{\Lambda_2^2}$ is given by the diagram $p_2[x]\xrightarrow{f^\sharp}X\xleftarrow{g^\sharp}p_2[y]$.
    \end{itemize}Now we can take $p$ to be the right Kan extension of $F$ along the localization map $u:B_2\coprod_{0\sim x,1\sim y}\Lambda_2^2\rightarrow B$:\begin{equation}
        \begin{tikzcd}
      & {B_2\coprod_{0\sim x,1\sim y}[0,1]} \arrow[d, Rightarrow] \arrow[rd, bend right] \arrow[rd, "p", dashed, bend left] &  \\
{B_2\coprod_{0\sim x,1\sim y}\Lambda_2^2} \arrow[ru, "u"] \arrow[rr, "F"'] & {}                           & \mathcal{S}
\end{tikzcd}.
    \end{equation}We then conclude by the universal property of Kan extension that $p$ is the polynomial functor as desired.
\end{proof}
\begin{rmk}
    Our construction is inspired by \cite[theorem 5.43]{niu2023polynomial}, the case of $Poly$. As an example, if we denote by $y^X$ the polynomial represented by $X$ following the convention in $Poly$, then given two maps $f^\sharp:A\rightarrow X$ and $g^\sharp:B\rightarrow X$ of spaces, by the formula of Kan extension (\cite[proposition 7.3.5.1]{Kerodon}), the corresponding coequalizer of $y^X\rightrightarrows y^A+y^B$ is $y^{A\times_X^hB}$, where $A\times_X^hB$ is the homotopy pullback.
\end{rmk}
\begin{cor}
    The $\i$-category $\P$ is cocomplete.
\end{cor}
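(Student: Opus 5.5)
The plan is to invoke the criterion recalled at the beginning of this subsection, \cite[remark 7.6.6.10]{Kerodon}: an $\i$-category that admits all small coproducts and all coequalizers of parallel pairs admits all small colimits. So the corollary should follow formally once both hypotheses are checked for $\P$.

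First I will verify the coproduct hypothesis. The corollary to \cref{justlab} gives small coproducts: a set $J$ is regarded as a discrete $\i$-groupoid, and the coproduct is $\coprod_J E_j\rightarrow\coprod_J B_j$.

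Next I will verify the coequalizer hypothesis, which is exactly the preceding theorem. There is one point to watch. That proof first reduces to the case where $p_1$ is representable, using \cref{justlab} together with the interchange of double colimits \cite[lemma 5.5.2.3]{HTT}. For a general $p_1=\colim_{B_1}Map_{\S}(p_1[b],-)$, the coequalizer of $f,g$ is then assembled as a colimit over $B_1$ of the coequalizers of the restricted pairs $y^{p_1[b]}\rightrightarrows p_2$. Each of these exists by the representable case, and the colimit over $B_1$ exists because $B_1$ is an $\i$-groupoid, by \cref{justlab} again. I will note explicitly that this reduction is part of the theorem's statement, so that the coequalizer is available for arbitrary parallel pairs $f,g:p_1\rightrightarrows p_2$ in $\P$ and not only when $p_1$ is representable.

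Combining the two steps with the cited remark gives that $\P$ admits all small colimits, i.e.\ it is cocomplete. The argument is purely formal; the only part needing attention is that the coequalizer theorem really covers general $p_1$, and that is settled by the reduction described above.
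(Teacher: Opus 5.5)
Your skeleton is the paper's: \cite[remark 7.6.6.10]{Kerodon} reduces cocompleteness to small coproducts (the corollary of \cref{justlab}) and coequalizers (the preceding theorem, which is stated for arbitrary parallel pairs). Citing those two results is all this corollary needs.

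The problem is the extra paragraph in which you re-justify the theorem's reduction to representable $p_1$, which you present as what settles the delicate point. As written it is wrong.

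Computed in $\P$, the colimit over $B_1$ of $C_b:=coeq(f_b,g_b:y^{p_1[b]}\rightrightarrows p_2)$ is not $coeq(f,g)$. Each $C_b$ carries its own copy of $p_2$, and the colimit glues together a $B_1$-indexed family of these copies. You can see this on positions, since taking position spaces preserves colimits: $\colim_{b\in B_1}coeq(f_1(b),g_1(b):*\rightrightarrows B_2)\simeq coeq(B_1\rightrightarrows B_1\times B_2)$, rather than $coeq(f_1,g_1:B_1\rightrightarrows B_2)$. For example, take $B_1$ to be two points and $p_2$ the constant functor on $*$, so that $f=g$ is forced. Then the colimit of the $C_b$ has positions $S^1\sqcup S^1$, while the true coequalizer has positions $S^1\vee S^1$.

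What is true is that $coeq(f,g)$ is the colimit of $b\mapsto C_b$ in the undercategory $(\P)_{p_2/}$, i.e.\ a colimit in $\P$ indexed by the cone $B_1^{\triangleleft}$. That is not an $\i$-groupoid-indexed colimit, so \cref{justlab} does not provide it. Rewriting it as the pushout of $p_2\leftarrow\colim_{B_1}p_2\rightarrow\colim_{B_1}C_b$ does not help either: it needs a coequalizer whose source $\colim_{B_1}p_2$ has position space $B_1\times B_2$ and so is not representable, which is circular.

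There are two ways to repair this. The simplest is to drop that paragraph and cite the coequalizer theorem for arbitrary $f,g$, as the paper does. If you want the general case justified directly, run the theorem's Kan-extension construction with $B_1$ in place of the point, with no reduction at all:
\begin{enumerate}
\item glue $B_1\times\Lambda^2_2$ to $B_2$ along $f_1$ on $B_1\times\{0\}$ and $g_1$ on $B_1\times\{1\}$;
\item put $p_2$ on $B_2$, and the transformations $f^\sharp:p_2\circ f_1\Rightarrow p_1$ and $g^\sharp:p_2\circ g_1\Rightarrow p_1$ on $B_1\times\Lambda^2_2$;
\item right Kan extend along the localization onto $coeq(f_1,g_1:B_1\rightrightarrows B_2)$.
\end{enumerate}
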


\section{Composition of polynomials}
\subsection{The composition product}
As endofunctors, polynomial functors can compose. This composition, in the classical case of $Poly$, can be easily written down as an explicit formula, which is just the incarnation of the usual algebraic rule of polynomial expansion: for every $p_1,p_2\in Poly$ we have (\cite[equation 21]{handbook})\begin{equation}
    p_1\circ p_2=\sum_{b_1\in B_1}(\sum_{b_2\in B_2}y^{p_2[b_2]})^{p_1[b_1]}=\sum_{b_1\in B_1,f:p_1[b_1]\rightarrow B_2}y^{\sum_{i\in p_1[b_1]}p_2[f(i)]}.
\end{equation} Hence a position of $p_1\circ p_2$ is a map $f:p_1[b_1]\rightarrow B_2$ for some position $b_1$ of $p_1$, and the direction set at $f$ is the pullback of $p_2$ along $f$. In \cite[theorem 2.1.8]{gepner2022operads}, the authors proved that compositions of polynomial functors over $\S$ are still polynomial functors, by giving an explicit description under their formalism. Since in this paper we have used another formalism, we will reprove the result here, also by giving an explicit description, which will make the analogy with the equation above transparent.
\begin{thm}
    For any $(E_i\xrightarrow{p_i}B_i)\in\P$ ($i=1,2$), $p_1\circ p_2\in Fun(\S,\S)$ is a polynomial functor.
\end{thm}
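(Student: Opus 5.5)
We compute $p_1\circ p_2$ pointwise and rewrite it, using the double limit/colimit results of Section 2, as a colimit indexed by an $\infty$-groupoid of representable copresheaves. Fix $X\in\S$. By definition,
\begin{equation}
(p_1\circ p_2)(X)\simeq\colim_{b_1\in B_1}Map_{\S}\bigl(p_1[b_1],p_2(X)\bigr)\simeq\colim_{b_1\in B_1}\lim_{i\in p_1[b_1]}\colim_{b_2\in B_2}Map_{\S}(p_2[b_2],X).
\end{equation}
Here we use \cref{constdiag} to write a mapping space out of $p_1[b_1]$ as a limit over $p_1[b_1]$ of a constant diagram. The inner term is the limit, over the groupoid $p_1[b_1]$, of a colimit over $B_2$ of a diagram $m(i,b_2)=Map_{\S}(p_2[b_2],X)$ that is constant in $i$.

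We then apply \cref{mapspace} with $B=p_1[b_1]$, $B^\prime=B_2$ and this $m$. This gives
\begin{equation}
\lim_{p_1[b_1]}\colim_{B_2}Map_{\S}(p_2[b_2],X)\simeq\colim_{f\in Map_{\S}(p_1[b_1],B_2)}\lim_{i\in p_1[b_1]}Map_{\S}(p_2[f(i)],X)\simeq\colim_{f}Map_{\S}\Bigl(\colim_{i\in p_1[b_1]}p_2[f(i)],X\Bigr).
\end{equation}
The last step uses that $Map_{\S}(-,X)$ turns colimits into limits. By \cref{repeat1}, $\colim_{i}p_2[f(i)]$ is the total space of the pullback $f^*E_2=p_1[b_1]\times_{B_2}E_2$, which matches the classical formula.

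Substituting back, we obtain a double colimit $\colim_{b_1\in B_1}\colim_{f\in Map(p_1[b_1],B_2)}$ of representables. To collapse it into a single colimit over an $\infty$-groupoid, we form the diagram $b_1\mapsto\bigl(Map_{\S}(p_1[b_1],B_2)\to\S,\ f\mapsto \colim_i p_2[f(i)]\bigr)$ in $\S_{/\S}$. Equivalently, this is a polynomial functor $B_1\to\P$ landing in diagrams indexed by groupoids, and we apply \cref{sovers} in the form of \cref{dbcolim1}. The total index space is
\begin{equation}
B:=\colim_{b_1\in B_1}Map_{\S}(p_1[b_1],B_2)\simeq (p_1)_*(E_1\times B_2)\simeq p_1(B_2),
\end{equation}
as it must be, since $B$ is the evaluation of $p_1\circ p_2$ at $*$. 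The direction space at $(b_1,f)$ is $\colim_{p_1[b_1]}p_2\circ f$. Alternatively, we can invoke \cref{justlab} directly: for each $b_1$ the inner colimit is a polynomial functor with positions $Map(p_1[b_1],B_2)$, and a colimit over the groupoid $B_1$ of polynomials is again polynomial.

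The main obstacle is \textbf{functoriality/coherence}. All the equivalences above must be natural in $X$, so that we obtain an equivalence of functors and not merely of values. Moreover, the assignment $b_1\mapsto(f\mapsto\colim_i p_2[f(i)])$ must be an honest functor $B_1\to\S_{/\S}$, since the footnote to \cref{justlab} warns this is subtle. We handle this as follows.
\begin{itemize}
\item \emph{The diagram on $B$.} Build it directly as a functor on $B$: unstraighten to $B$, take the universal map $B\times_{B_1}E_1\to B_2$ (the evaluation), and pull back $E_2$ along it. The fibre over $(b_1,f)$ is then $f^*E_2$, and the whole construction is a composite of canonical operations (pullbacks, dependent products) in $\S$.
\item \emph{Naturality in $X$.} The equivalences come from natural ones: Yoneda, \cref{mapspace}, and exponential adjunction. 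To get them as equivalences of functors in $X$, we run the argument in $Fun(\S,\S)$, viewing $X$ as a variable. Colimits and limits of functors are computed pointwise, so each step lifts.
\end{itemize}
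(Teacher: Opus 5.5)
Your proposal is correct and follows essentially the same route as the paper: you rewrite $p_1(p_2(X))$ via \cref{constdiag} and \cref{mapspace} as $\colim_{b_1}\colim_{f\in Map_{\S}(p_1[b_1],B_2)}Map_{\S}(f^*E_2,X)$, then collapse the double colimit with \cref{dbcolim1}, giving position space $p_1(B_2)$ and direction spaces $f^*E_2$. The only difference is cosmetic: you build the diagram on $B$ directly by pulling $E_2$ back along the evaluation map $B\times_{B_1}E_1\to B_2$, whereas the paper forms it as the colimit in $\S_{/\S}$ of the $B_1$-indexed family $Map_{\S}(p_1[b_1],B_2)\to\S_{/B_2}\xrightarrow{p_2^*}\S_{/E_2}\to\S$; note also that the footnote to \cref{justlab} says such $B_1$-indexed families pose no problem when $B_1$ is an $\infty$-groupoid, so the coherence worry you raise about that footnote does not arise here.
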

\begin{proof}
    If it were a polynomial functor, the evaluation at $*$ must produce its position space, which is \begin{equation}
        B:=p_1(p_2(*))\simeq p_1(B_2)\simeq\colim_{B_1}Map_{\S}(p_1[b_1],B_2).
    \end{equation} We have a family of diagrams in $\S$ indexed by $b_1\in B_1$, given by the following composition of functors:\begin{equation}
        Map_{\S}(p_1[b_1],B_2)\rightarrow\S_{/B_2}\xrightarrow{p_2^*}\S_{/E_2}\xrightarrow{forget}\S.
    \end{equation} Here the naturality in $b_1$ follows from the pasting law of pullbacks. Consider the resulting colimit in $\S_{/\S}$, denoted as $B\xrightarrow{p}\S$. We claim that $p_1\circ p_2$ is the polynomial functor $p$. The goal is to establish a natural equivalence $p_1(p_2(X))\simeq p(X)$, $\forall X\in\S$. First, by \cref{constdiag} we have \begin{equation}
        p_1(p_2(X))\simeq\colim_{B_1}Map_{\S}(p_1[b_1],p_2(X))\simeq\colim_{B_1}\lim_{p_1[b_1]}p_2(X).
    \end{equation}And by \cref{mapspace}, inside the colimit above we have \begin{align}
        \lim_{p_1[b_1]}p_2(X)&\simeq\lim_{p_1[b_1]}\colim_{B_2}Map_{\S}(p_2[b_2],X)\\&\simeq\colim_{f:p_1[b_1]\rightarrow B_2}\lim_{i\in p_1[b_1]}Map_{\S}(p_2[f(i)],X)\\&\simeq\colim_{f:p_1[b_1]\rightarrow B_2}Map_{\S}(\colim_{i\in p_1[b_1]}p_2[f(i)],X).
    \end{align}Furthermore, inside this colimit above, since pullbacks in $\S$ preserve colimit, we note that \begin{equation}
    \colim_{i\in p_1[b_1]}p_2[f(i)]\simeq\colim(p_1[b_1]\xrightarrow{f}B_2\xrightarrow{p_2}\S)\simeq f^*(E_2)
    \end{equation}which is exactly $p|_{Map_{\S}(p_1[b_1],B_2)}$ evaluated at $f$ by our definition of $p$. Combining the equations above finally yields\begin{equation}
        p_1(p_2(X))\simeq\colim_{B_1}\colim_{f:p_1[b_1]\rightarrow B_2}Map_{\S}(p(f),X)\simeq\colim_{b\in B}Map_{\S}(p[b],X)\simeq p(X).
    \end{equation}Here, we have used \cref{dbcolim1} to contract the double colimit.
    \end{proof}
    \begin{ex}\label{fuheji}
        For $(E\xrightarrow{p}B)\in\P$, let $p_n:E_n\rightarrow B_n$ denote the $n$-times composition $p^{\circ n}$ of $p$. The position spaces are given by a recurrence sequence:\begin{equation}
            B_{n+1}\simeq p(B_n)\simeq\colim_BMap_{\S}(p[b],B_n).
        \end{equation}The direction spaces are also determined recursively: once the total space $E_n$ is known, the direction space of $p_{n+1}$ at a position $f\in Map_{\S}(p[b],B_n)$ is $f^*(E_n)$.
    \end{ex}

\subsection{The coclosure}
The following lemma, in preparation for the next theorem, is well known in the case of ordinary categories.
\begin{lem}
    Let $F:\mathcal{C}\rightarrow\mathcal{D}$ be an $\i$-functor, then the left Kan extension along $F$ from $Fun(\mathcal{C},\S)$ to $Fun(\mathcal{D},\S)$ takes representables to representables:\begin{equation}
        Lan_F(Map_{\mathcal{C}}(c,-))\simeq Map_{\mathcal{D}}(F(c),-),\ \ \ \forall c\in\mathcal{C}.
    \end{equation}
\end{lem}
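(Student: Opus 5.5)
The plan is to characterize $Lan_F$ as the left adjoint of the restriction functor $F^*:Fun(\mathcal{D},\S)\rightarrow Fun(\mathcal{C},\S)$, $G\mapsto G\circ F$, and then check representability through the Yoneda lemma. Concretely, I will show that $Map_{\mathcal{D}}(F(c),-)$ corepresents the same functor on $Fun(\mathcal{D},\S)$ as $Lan_F(Map_{\mathcal{C}}(c,-))$ does.

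First, fix $G\in Fun(\mathcal{D},\S)$. By the adjunction $Lan_F\dashv F^*$, which for functors into the cocomplete $\i$-category $\S$ exists by the pointwise formula \cite[proposition 7.3.5.1]{Kerodon}, we get
\begin{equation}
Map_{Fun(\mathcal{D},\S)}(Lan_F(Map_{\mathcal{C}}(c,-)),G)\simeq Map_{Fun(\mathcal{C},\S)}(Map_{\mathcal{C}}(c,-),G\circ F).
\end{equation}
Second, the $\i$-categorical Yoneda lemma (in its covariant form, exactly as used in the proof of the mapping space theorem above) identifies the right-hand side with $G(F(c))$. Third, applying the Yoneda lemma once more in $Fun(\mathcal{D},\S)$ identifies $G(F(c))$ with $Map_{Fun(\mathcal{D},\S)}(Map_{\mathcal{D}}(F(c),-),G)$.

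Chaining these equivalences gives, naturally in $G$,
\begin{equation}
Map(Lan_F(Map_{\mathcal{C}}(c,-)),G)\simeq Map(Map_{\mathcal{D}}(F(c),-),G).
\end{equation}
The Yoneda embedding of $Fun(\mathcal{D},\S)^{op}$ is fully faithful, so this yields the desired equivalence $Lan_F(Map_{\mathcal{C}}(c,-))\simeq Map_{\mathcal{D}}(F(c),-)$.

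The main obstacle is naturality in $G$. Each Yoneda equivalence must be natural in $G$, not merely an objectwise equivalence of spaces. For the adjunction step this is automatic. For the Yoneda steps I will use that evaluation $ev_{F(c)}:Fun(\mathcal{D},\S)\rightarrow\S$ is corepresented by $Map_{\mathcal{D}}(F(c),-)$ as a functor. Likewise, $ev_c\circ F^*=ev_{F(c)}$ is corepresented by $Map_{\mathcal{C}}(c,-)$ through $F^*$, which is a natural equivalence of functors $Fun(\mathcal{D},\S)\rightarrow\S$. Phrasing every step as an equivalence of functors in $G$, and then using that corepresenting objects are unique up to contractible choice, avoids the coherence problem.

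As a sanity check, the pointwise formula gives $Lan_F(Map_{\mathcal{C}}(c,-))(d)\simeq\colim_{F(c^\prime)\rightarrow d}Map_{\mathcal{C}}(c,c^\prime)$, which should match $Map_{\mathcal{D}}(F(c),d)$ by a cofinality argument. I will not need this computation.
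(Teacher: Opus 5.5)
Your proposal is correct and is essentially the paper's proof. Both use the adjunction $Lan_F\dashv F^*$, then the Yoneda lemma in $Fun(\mathcal{C},\S)$ and in $Fun(\mathcal{D},\S)$ to identify both mapping spaces with $G(F(c))$ naturally in $G$, and finally uniqueness of corepresenting objects; your extra care about naturality (using $ev_c\circ F^*=ev_{F(c)}$) makes explicit what the paper leaves implicit.
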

\begin{proof}
    Since $Lan_F$ is left adjoint to $F_!:Fun(\mathcal{D},\S)\rightarrow Fun(\mathcal{C},\S)$, there is an equivalence\begin{equation}
        Map_{Fun(\mathcal{D},\S)}(Lan_F(Map_{\mathcal{C}}(c,-)),G)\simeq Map_{Fun(\mathcal{C},\S)}(Map_{\mathcal{C}}(c,-),G\circ F)
    \end{equation}natural in $G\in Fun(\mathcal{D},\S)$. By Yoneda lemma, it follows that in $Fun(\mathcal{D},\S)$ we have a natural equivalence of mapping spaces \begin{equation}
    Map(Lan_F(Map_{\mathcal{C}}(c,-)),G)\simeq G(F(c))\simeq Map(Map_{\mathcal{D}}(F(c),-),G).
    \end{equation}We thus conclude that $Lan_F(Map_{\mathcal{C}}(c,-))\simeq Map_{\mathcal{D}}(F(c),-)$, again by Yoneda lemma.
\end{proof}
\begin{thm}
    For any $p\in\P$, the functor $-\circ p:\P\rightarrow\P$ induced by the precomposition with $p$ admits a left adjoint, denoted by ${p\brack-}$, defined by\begin{equation}
        {p\brack p_1}:=\varinjlim_{B_1}Map_{\mathcal{S}}(p(p_1[b_1]),-),\ \ \ \forall(B_1\xrightarrow{p_1}\S)\in\P.
    \end{equation}
\end{thm}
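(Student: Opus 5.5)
First observe that ${p\brack p_1}$ is a polynomial functor by definition: it is the colimit over the $\i$-groupoid $B_1$ of the representables $Map_{\S}(p(p_1[b_1]),-)$. So it has position space $B_1$ and direction space $p(p_1[b_1])$ at $b_1$, and the whole content of the statement is the adjunction. The plan is to build a natural equivalence
\begin{equation}
Map_{\P}({p\brack p_1},p_2)\simeq Map_{\P}(p_1,p_2\circ p),
\end{equation}
natural in $p_2\in\P$, using only the universal property of colimits and the Yoneda lemma, as in the computation of mapping spaces in Section 3.2. Here $p_2\circ p\in\P$ by the composition theorem of the previous subsection.

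For the left-hand side, the universal property of the colimit gives
\begin{equation}
Map_{\P}({p\brack p_1},p_2)\simeq\lim_{B_1}Map_{Fun(\S,\S)}(Map_{\S}(p(p_1[b_1]),-),p_2)\simeq\lim_{B_1}p_2(p(p_1[b_1])),
\end{equation}
where the first step uses that $\P$ is a full subcategory of $Fun(\S,\S)$ and the second is the $\i$-categorical Yoneda lemma. For the right-hand side, the same two steps applied to $p_1=\colim_{B_1}Map_{\S}(p_1[b_1],-)$ give
\begin{equation}
Map_{\P}(p_1,p_2\circ p)\simeq\lim_{B_1}(p_2\circ p)(p_1[b_1])=\lim_{B_1}p_2(p(p_1[b_1])).
\end{equation}
The two expressions agree. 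Equivalently, the lemma just proved (with $F=p$) says $Lan_p(Map_{\S}(c,-))\simeq Map_{\S}(p(c),-)$, so ${p\brack p_1}\simeq Lan_p(p_1)$, since $Lan_p$ preserves colimits. The adjunction $Lan_p\dashv -\circ p$ on $Fun(\S,\S)$ then restricts to $\P$ because both functors preserve $\P$. I would present it this way, as it makes naturality automatic.

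The main obstacle is coherence: we need an adjunction of $\i$-functors, not merely objectwise equivalences of mapping spaces. The Kan extension route handles this cleanly. $Lan_p:Fun(\S,\S)\to Fun(\S,\S)$ exists as a left adjoint to precomposition, assuming set-theoretic size issues are handled by noting that we only apply it to small colimits of representables. It preserves colimits. By the lemma it sends representables to representables, hence it sends $\P$ into $\P$, giving exactly the formula for ${p\brack p_1}$. Meanwhile $-\circ p$ sends $\P$ into $\P$ by the composition theorem. Since $\P\subset Fun(\S,\S)$ is full, an adjunction between endofunctors that restrict to $\P$ restricts to an adjunction on $\P$. This yields ${p\brack -}\dashv -\circ p$.
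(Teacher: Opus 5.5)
Your proposal is correct and follows the paper's proof: restrict the adjunction $Lan_p\dashv -\circ p$ on $Fun(\S,\S)$ to the full subcategory $\P$, using the composition theorem for closure under $-\circ p$, and the preceding lemma together with colimit-preservation of $Lan_p$ to identify $Lan_p(p_1)$ with ${p\brack p_1}$. Your mapping-space computation is the same as the one in the remark after the theorem, and your note on the size issue in the existence of $Lan_p$ addresses a point the paper passes over silently.
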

\begin{proof}
    Since on $Fun(\S,\S)$ we have an adjunction $Lan_p\dashv(-\circ p)$ and $\P\subset Fun(\S,\S)$ is a full subcategory, it suffices to show that $\P$ is closed under these two functors so that the adjunction descends to $\P$. We have proved the case of $-\circ p$ in the last subsection, it remains to prove that $Lan_p(p_1)$ is a polynomial, $\forall p_1\in\P$. But this immediately follows from the lemma above, \cref{justlab} and the fact that $Lan_p$ preserves colimit, as a left adjoint. Concretely, since $p_1$ is the colimit of the representables $Map_{\S}(p_1[b_1],-)$, we derive the formula \begin{equation}
        Lan_p(p_1)\simeq\colim_{B_1}Lan_p(Map_{\S}(p_1[b_1],-))\simeq\colim_{B_1}Map_{\S}(p(p_1[b_1]),-)={p\brack p_1}
    \end{equation} by the lemma above. 
\end{proof}
\begin{rmk}
    This result is a straightforward generalization of the coclosure formula for $\circ$ in $Poly$ (\cite[equation 68]{handbook}). Alternatively, we can characterize this adjunction by specifying its unit. By the universal property of colimit and Yoneda lemma, there is an equivalence natural in $p_1,p_2\in\P$:\begin{align}
		Map_{Poly_{\mathcal{S}}}({p\brack p_1},p_2)&\simeq\varprojlim_{B_1}Map_{Poly_{\mathcal{S}}}(Map_{\mathcal{S}}(p(p_1[b_1]),-),p_2)\\&\simeq\varprojlim_{B_1}(p_2\circ p)(p_1[b_1])\\&\simeq\varprojlim_{B_1}Map_{Poly_{\mathcal{S}}}(Map_{\mathcal{S}}(p_1[b_1],-),p_2\circ p)\\&\simeq Map_{Poly_{\mathcal{S}}}(p_1,p_2\circ p).
	\end{align} This equivalence is induced by the unit transformation $p_1\rightarrow{p\brack p_1}\circ p$, defined by the obvious map \begin{equation}
	    \colim_{B_1}Map_{\S}(p_1[b_1],-)\xrightarrow{p}\colim_{B_1}Map_{\S}(p(p_1[b_1]),p(-)).
	\end{equation}Besides, there are two more monoidal products in $Poly$, the categorical product and the so-called \textit{Dirichlet product}, that both admit closure with explicit construction using the composition product (see \cite[section 4]{handbook}). We expect no obstacles in generalizing these structures to the setting of $\P$. However, we will not pursue this line in the current paper.
\end{rmk}

\section{Monoidal infinity categories and comonads}
\subsection{Monoidal infinity structure revisited}
Following the standard definition coined by Lurie, one typically understands a monoidal $\i$-category as a coCartesian fibration $\mathcal{C}^\otimes\rightarrow\Delta^{op}$\footnote{Here $\Delta$ stands for the simplex category, viewed as an $\i$-category via taking its nerve.} satisfying the Segal condition (\cite[definition 1.1.2]{feijiaohuanl}), and one often says informally that $\mathcal{C}:=\mathcal{C}^\otimes_{[1]}$ which denotes the fiber at $[1]\in\Delta^{op}$ is a monoidal $\i$-category. Under this formalism, the monoidal product in $\mathcal{C}$ is encoded in the coCartesian lifts of $[1]\cong\{0,2\}\hookrightarrow\{0,1,2\}=[2]$. The notion of monoid objects in $\mathcal{C}$ is defined in \cite[definition 1.1.14]{feijiaohuanl}, which roughly mimics the classical fact that a monoid object in an ordinary monoidal category $C$ can be identified with a lax monoidal functor from the trivial category $\mathbf{1}$ to $C$.\par
However, in this paper we are concerned with comonoid objects, and this notion is not so easily encoded in the aforementioned formalism. As in the ordinary categorical case, one may wish to define a comonoid object in $\mathcal{C}$ as an oplax monoidal functor from the trivial category $\mathbf{1}$ to $\mathcal{C}$, and in addition define an oplax monoidal functor as a lax monoidal functor between opposite monoidal categories. This is not wrong technically, but as pointed out by Lurie in \cite[remark 1.1.11]{feijiaohuanl}, in his formalism the notion of oplax monoidal (which he called left monoidal) functors has no easy definition. We explain the key issue here: when $\mathcal{C}^\otimes\rightarrow\Delta^{op}$ represents a monoidal infinity structure, the fact that the opposite category $\mathcal{C}^{op}$ also inherits a monoidal structure, which is rather trivial in the ordinary categorical case, becomes very tricky now. The naive attempt $(\mathcal{C}^{op})^\otimes\simeq(\mathcal{C}^\otimes)^{op}$ is not expected to be correct: after all, there is no evident choice of coCartesian fibration $(\mathcal{C}^\otimes)^{op}\rightarrow\Delta^{op}$. In fact, the correct description of opposite monoidal structure involves the notion of \textit{dual fibration} worked out in detail in \cite{barwick2018dualizing}. In our paper, we will not and do not have to dive into the explicit construction of dual fibration. Instead, we will sidestep the subtle issue discussed here by tactically working with an alternative formalism, which can be viewed as an ``opposite version'' of Lurie's definition:
\begin{defn}
    \footnote{The author was first informed of this definition in \cite[remark 3.3]{torii2025duoidal}.}A Cartesian fibration $\mathcal{C}^\otimes\rightarrow\Delta$ is said to be a monoidal infinity structure, if it satisfies the Segal condition: denoting by $\mathcal{C}^\otimes_{[n]}$ the fiber at $[n]\in\Delta$, there is an equivalence of $\i$-categories \begin{equation}
        \mathcal{C}^\otimes_{[n]}\xrightarrow{\sim}\prod_{i=1}^n\mathcal{C}^\otimes_{\{i-1,i\}}\simeq(\mathcal{C}^\otimes_{[1]})^n
    \end{equation} induced by the inclusion of spine in the standard $n$-simplex, for each $n\in\mathbb{N}$. We often say that $\mathcal{C}:=\mathcal{C}^\otimes_{[1]}$ is a monoidal $\i$-category for simplicity.
\end{defn}
It is clear that the two definitions are equivalent: in fact, under the straightening-unstraightening equivalence, in both formalisms the data amounts to a ``simplicial $\i$-category'' $\Delta^{op}\rightarrow\Cat_{\i}$ that sends $[n]$ to $\mathcal{C}^n$ for all $n$. More specifically, the relation between the two formalisms is clarified as follows:\begin{itemize}
    \item If the monoidal structure of $\mathcal{C}$ is given by coCartesian fibration $\mathcal{C}^\otimes\rightarrow\Delta^{op}$, then passing to the dual, it is also given by the Cartesian fibration $(\mathcal{C}^\otimes)^\vee\rightarrow\Delta$ (see the table in \cite[section 1.1]{barwick2018dualizing}), and vice versa.
    \item On the other hand, passing to the opposite, if a fibration (coCartesian or Cartesian) satisfies one of the definitions, then its opposite fibration satisfies the other one and vice versa. That is to say, if the monoidal structure of $\mathcal{C}$ is given by one side, then the monoidal structure of $\mathcal{C}^{op}$ is better given by the other side. If sticking to one formalism, for example the original one, then the monoidal structure $\mathcal{C}^\otimes\rightarrow\Delta^{op}$ on $\mathcal{C}$ induces an opposite monoidal structure $((\mathcal{C}^\otimes)^\vee)^{op}\simeq(\mathcal{C}^{op})^\otimes\rightarrow\Delta^{op}$ on $\mathcal{C}^{op}$.
\end{itemize}
Similarly, the original formalism is able to capture the notion of lax monoidal functor but not that of oplax monoidal functor, while in our formalism, we can easily define oplax monoidal functor, at the expense of the notion of lax monoidal functor, which will not be relevant to the topic of our paper anyway.
\begin{defn}\label{comoddef}
    Let $\mathcal{C}^\otimes\rightarrow\Delta$ be a monoidal $\i$-category. A comonoid object in $\mathcal{C}$ is an oplax monoidal functor from $\mathbf{1}$ to $\mathcal{C}$, that is, a section $q:\Delta\rightarrow\mathcal{C}^\otimes$ such that every \textit{convex morphism} $\rho^i_n:[1]\cong\{i-1,i\}\hookrightarrow[n]$ ($1\leq i\leq n$) is sent by $q$ to a Cartesian morphism. We informally say that $q([1])$ is the comonoid object. 
\end{defn}

\subsection{Polynomial comonads}
Now we specialize the discussion made in the last subsection to the case of $\P$. As a full subcategory of $Fun(\S,\S)$ which is closed under the composition product, $\P$ inherits the monoidal infinity structure of $Fun(\S,\S)$ induced by $\circ$ (\cite[proposition 1.3.1.1]{feijiaohuanl}), with $id_{\S}$ being the monoidal unit. The fact that $Fun(\S,\S)$ is indeed a monoidal $\i$-category (\cite[proposition 3.1.2.1]{feijiaohuanl}), which seems very plausible at least in the ordinary categorical case, takes some efforts to work out: first, viewing $Fun(\S,\S)$ as a simplicial monoid under the composition, we can apply the classifying space functor $B$ to it (\cite[construction 1.3.2.5]{Kerodon}), to obtain a functor $F:\Delta^{op}\rightarrow\Cat_{\i}$ that sends $[n]$ to $Fun(\S,\S)^n$, $\forall n\in\mathbb{N}$. This is the resulting monoidal structure before unstraightening. Then Lurie takes the weighted nerve of $F$ to obtain a coCartesian fibration over $\Delta^{op}$ (\cite[corollary 5.3.3.16]{Kerodon}), while in our formalism we need to consider the Cartesian fibration that $F$ classifies. The upshot in the case of $\P$ is that, there is a Cartesian fibration $\P^\otimes\rightarrow\Delta$ such that \begin{itemize}
    \item The fiber at $[n]\in\Delta$ can be identified with $\P^n$, whose objects are $n$-tuples of polynomial functors.
    \item Over a morphism $f:[n]\rightarrow[m]$ in $\Delta$, a morphism $(p_1,\cdots,p_n)\rightarrow(p_1^\prime,\cdots,p_m^\prime)$ in $\P^\otimes$ consists of the following data: for each $1\leq i\leq n$, a morphism \begin{equation}
    \varphi^i:p_i\rightarrow p_{f(i-1)+1}^\prime\circ\cdots\circ p^\prime_{f(i)}    
    \end{equation}
    in $\P$. This morphism is Cartesian precisely when each $\varphi^i$ is an equivalence in $\P$. For example, the composition product in $\P$ is encoded in the Cartesian lifts of $[1]\cong\{0,2\}\hookrightarrow\{0,1,2\}=[2]$, and the monoidal unit $id_{\S}$ is specified by the Cartesian lift of $[1]\twoheadrightarrow[0]$. 
\end{itemize} 
To unravel \cref{comoddef} in the case of $\P$ in more detail, we first review what are the polynomial comonads in $Poly$.
\begin{defn}
    A polynomial comonad over $\Set$, is an object $p\in Poly$ equipped with a \textit{counit} $\epsilon$ from $p$ to $y:=id_{\Set}$, and a \textit{comultiplication} $\delta:p\rightarrow p\circ p$, satisfying the following compatibility conditions:\begin{itemize}
        \item The unit law: the diagram below commutes in $Poly$.\begin{equation}
			\begin{tikzcd}
				y\circ p \arrow[r, no head, Leftarrow] & p \arrow[r, no head, Rightarrow] \arrow[d, "\delta"']     & p\circ y \\
				& p\circ p \arrow[ru, "id\circ\epsilon"'] \arrow[lu, "\epsilon\circ id"] &         
			\end{tikzcd}.
		\end{equation}
        \item The associative law: the diagram below commutes in $Poly$.\begin{equation}
			\begin{tikzcd}
				p \arrow[r, "\delta"] \arrow[d, "\delta"'] & p\circ p \arrow[d, "id\circ\delta"] \\
				p\circ p \arrow[r, "\delta\circ id"']      & p\circ p\circ p 
			\end{tikzcd}.
		\end{equation}
    \end{itemize}
\end{defn}
\begin{ntn}\label{shouldi}
    We adopt the standard notation that in $\Delta$, the \textit{face map} $d^n_i:[n-1]\rightarrow[n]$ is the one whose values only omit $i$ ($0\leq i\leq n$); the \textit{degeneracy map} $s^n_i:[n+1]\rightarrow[n]$ is the one whose values only duplicate $i$ ($0\leq i\leq n$). We often suppress the superscript for brevity when there is no risk of confusion.    
\end{ntn}
Over $\S$, the comonoid structure is encoded in a section $q:\Delta\rightarrow\P^\otimes$, whose image $p:=q([1])$ is the underlying polynomial comonad in $\P$. The condition that $q(\rho^i_n)$ is a Cartesian lift of $\rho^i_n$ says nothing but that the image $q([n])\in\P^n$ is precisely $(p,p,\cdots,p)$, for all $n$. To compare with the classical definition above, we note that the counit $\epsilon:p\rightarrow id_{\S}$ and the comultiplication $\delta:p\rightarrow p\circ p$ are induced by $q([1]\twoheadrightarrow[0])$ and $q([1]\xhookrightarrow{non\ convex}[2])$ respectively. Moreover, the unit law and the associative law boil down to the commutativity of \begin{equation}
    \begin{tikzcd}
{[1]} \arrow[r, "d_1", hook] \arrow[d, "d_1"', hook] & {[2]} \arrow[d, "s_0", two heads] &  & {[1]} \arrow[r, "d_1", hook] \arrow[d, "d_1"', hook] & {[2]} \arrow[d, "d_1", hook] \\
{[2]} \arrow[r, "s_1"', two heads]                   & {[1]}                             &  & {[2]} \arrow[r, "d_2"', hook]                        & {[3]}   
\end{tikzcd}
\end{equation} in $\Delta$. In the classical definition, there are only finitely many compatibility conditions to check, terminating at $p\circ p\circ p$. But in the $\i$-categorical case, there are infinitely many higher homotopy coherence structures involving all $p^{\circ n}$, which roughly speaking, are stored in the combinatorics in $\Delta$.

\section{A construction on polynomial comonads}
\subsection{The theorem of Ahman-Uustalu}\label{lablesubsec}
Before discussing a possible generalization of the Ahman-Uustalu theorem for $\P$, we first review its classical context. Starting with a polynomial comonad $p:E\rightarrow B$ over $\Set$, there is a canonical construction of turning $p$ into a category $C$, following the dictionary described below:\begin{itemize}
    \item Objects = positions: $Ob(C)=B$; arrows = directions: $Mor(C)=E$.
    \item The map $p:E\rightarrow B$ specifies the source object of an arrow.
    \item The comultiplication $\delta:p\rightarrow p\circ p$ on positions picks a map $f_b:p[b]\rightarrow B$ for each $b\in B$. Together $\sum_{b\in B}f_b$ induces a map $q:E\rightarrow B$ that specifies the target object.
    \item For each $b\in B$, the comultiplication on directions $\delta^\sharp_b$ determines an element $e\in p[b]$ for each $e_1\in p[b]$ and $e_2\in p[f_b(e_1)]$, which in the category $C$ is exactly the composition $e_2\circ e_1$. The associative law of arrow compositions is encoded in the associative law of $\delta$.
    \item The counit $\epsilon:p\rightarrow y$ as a global section of $p$, for each $b\in B$ picks an element $id_b\in p[b]$, which is the identity morphism at $b$. Identity morphisms satisfy various compatibility conditions encoded in the unit law.
\end{itemize} In summary, via the above procedure the whole structure of a small category can be repackaged and stored in the comonoid structure of a polynomial as equivalent data. The precise statement is as follows.
\begin{thm}
    There is an isomorphism (not just an equivalence) of categories, between the category of comonoid objects in $Poly$, and the category of small categories with retrofunctors (see \cite[definition 3.21]{spivak2025functorial}) among them. 
\end{thm}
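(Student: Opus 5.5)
I will construct mutually inverse functors between the category of comonoids in $Poly$ and the category of small categories with retrofunctors, and check that both composites are identities on the nose.

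\emph{From comonoids to categories.} Given a comonoid $(p,\epsilon,\delta)$ with $p:E\rightarrow B$, I build $C$ by the dictionary above. Objects are $B$ and arrows are $E$, with source map $p$. The target of $e\in p[b]$ is $f_b(e)$, where $f_b:p[b]\rightarrow B$ is $\delta_1$ at $b$. Composition is given by $\delta^\sharp_b$, which sends $(e_1,e_2)$ with $e_2\in p[f_b(e_1)]$ to an element of $p[b]$. The identity at $b$ is $\epsilon^\sharp_b$.

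I then unwind the comonoid axioms using the explicit formula for $\circ$ and the composition rule for morphisms (\cref{ssssss}). The positions part of the counit law $(\epsilon\circ id)\circ\delta=id$ says that $f_b(id_b)=b$. The directions part of $(id\circ\epsilon)\circ\delta=id$ says that $e\circ id_b=e$. The positions part of $(id\circ\epsilon)\circ\delta=id$ says that the target of a composite is the target of its last factor, and its directions part says that $id\circ e=e$. Finally, the positions and directions parts of coassociativity give associativity of composition, together with compatibility of sources and targets.

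\emph{From morphisms to retrofunctors.} A comonoid morphism $\varphi:p\rightarrow p'$ consists of $\varphi_1:B\rightarrow B'$ and $\varphi^\sharp_b:p'[\varphi_1 b]\rightarrow p[b]$. This is exactly the data of a retrofunctor: a map on objects together with a lifting of arrows out of $\varphi_1(b)$ to arrows out of $b$. Compatibility of $\varphi$ with $\epsilon$ says that identities lift to identities. Compatibility with $\delta$ says two things: on positions, lifting commutes with taking targets; on directions, the lift of a composite is the composite of the lifts. These are precisely the retrofunctor axioms of \cite[definition 3.21]{spivak2025functorial}.

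\emph{The inverse and the isomorphism.} The inverse functor sends a category $C$ to the polynomial $\sum_{c\in Ob(C)}y^{C[c]}$, where $C[c]$ is the set of arrows out of $c$. Its counit picks out identities, and its comultiplication is given by codomain on positions and composition on directions. The same axiom-by-axiom translation, read backwards, shows this is a comonoid. Both composites are identities on the nose, not merely up to isomorphism, because each step is a literal reindexing of the same sets and functions; this is why we get an isomorphism rather than just an equivalence. Functoriality of both constructions is immediate from \cref{ssssss}.

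\emph{Main obstacle.} The difficult part is the bookkeeping in the first step: writing $\delta:p\rightarrow p\circ p$ correctly in the form $(\delta_1,\delta^\sharp)$, and tracking how the whiskerings $id\circ\epsilon$ and $\epsilon\circ id$ act on positions and directions. Doing this carefully is what makes each axiom match exactly one categorical law. Once the correspondence between the counit/coassociativity laws and the unit/associativity laws is laid out in a table, the rest is routine.
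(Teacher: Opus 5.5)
Your route is the one the paper indicates. The paper records only the dictionary (positions as objects, directions as arrows, $p$ as source, $\delta_1$ as target, $\delta^\sharp$ as composition, $\epsilon$ as identities) and defers the verification to \cite[chapter 7]{niu2023polynomial]}. Your outline, including the matching of comonoid morphisms with retrofunctors, is sound. But you single out the bookkeeping as the crux, and your table of axioms is misassigned: one of the derivations you claim is false as stated.

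\emph{The right counit law on positions.} The positional part of $(id\circ\epsilon)\circ\delta=id$ says nothing about composites. Under $p\circ y\cong p$, the map $id\circ\epsilon$ acts on positions by $(b',f)\mapsto b'$. So this law says exactly that $\delta_1(b)$ has the form $(b,f_b)$, i.e.\ that $\delta_1$ is a section over $B$; the paper uses precisely this in the proof of \cref{zby}. You used it silently when you wrote $f_b:p[b]\rightarrow B$. It is also what makes the correspondence with categories bijective, since an arbitrary $\delta_1:B\rightarrow\sum_{b'\in B}B^{p[b']}$ carries an extra first coordinate.

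\emph{Where the target rule comes from.} The rule that the target of $e_2\circ e_1$ is the target of $e_2$ comes instead from the positional part of coassociativity. The map $(id\circ\delta)\circ\delta$ sends $b$ to $(b,f_b,(e_1,e_2)\mapsto f_{f_b(e_1)}(e_2))$, while $(\delta\circ id)\circ\delta$ sends it to $(b,f_b,(e_1,e_2)\mapsto f_b(\delta^\sharp_b(e_1,e_2)))$.

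\emph{The unit laws.} The law $e\circ id_b=e$ is the directional part of $(\epsilon\circ id)\circ\delta=id$, namely the map $e\mapsto\delta^\sharp_b(id_b,e)$. The directional part of $(id\circ\epsilon)\circ\delta=id$ is the map $e\mapsto\delta^\sharp_b(e,id_{f_b(e)})$, and it gives $id\circ e=e$.

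With these reassignments every category axiom is accounted for, and the converse direction goes through. Finally, the ``on the nose'' claim needs the convention that an object of $Poly$ is its data $(B,(p[b])_{b\in B})$, not an endofunctor of $\Set$ up to isomorphism. Under the latter convention a comonoid need not literally be of the form $\sum_c y^{C[c]}$, so you only get an equivalence.
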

We refer to \cite[chapter 7]{niu2023polynomial} for an elaborated exposition on this theorem. Here, we would like to give an informal account on this correspondence, in order to gain some guideline for the later construction. As shown in the figure below, in the first column, a position in $p$ is depicted as a dot, thought as an object in the category $C$. And pretty intuitively, a direction at this dot is an arrow emanating from it, but the target dot is temporarily indecisive: \begin{figure}[H]
\centering
\includegraphics[scale=0.2]{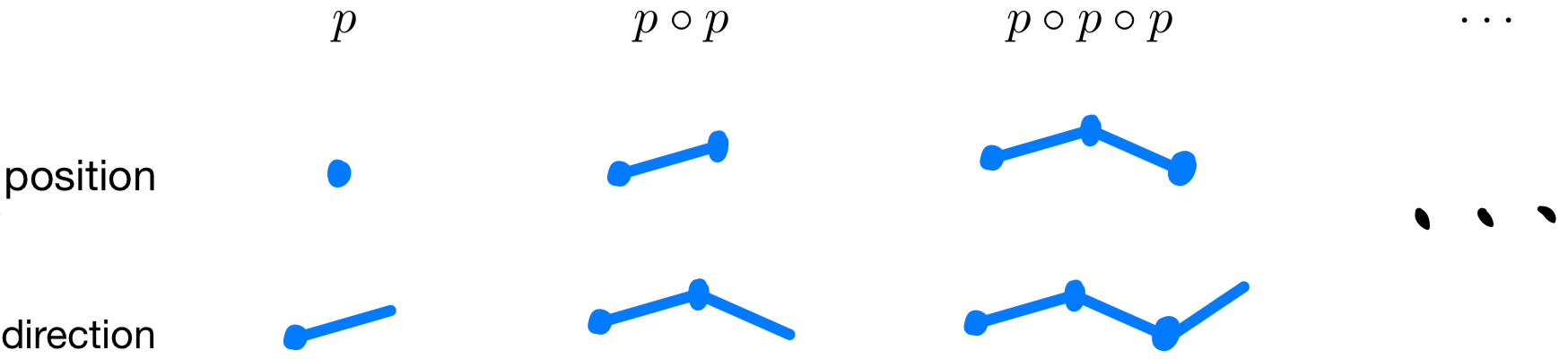}
\caption{Self compositions of a polynomial}
\end{figure}
As we move on to the second column, the positions of $p\circ p$ are thought as ``all possible ways of associating destinations to the arrows'', and the map $\delta_1$ picks the correct one in $C$. So strictly speaking, what we depict in the upper row of $p\circ p$ really means ``those positions coming from $B$''. To form a direction at a such position, we attach a new arrow to it, whose target is again missing, as for now. How we contract these two consecutive arrows is recorded in $\delta^\sharp$. Next, the missing target in the second column will be decided in those positions of $p^{\circ3}$ coming from $B$, and we expand another hanging arrow, so on and so forth. Eventually, the whole category $C$ grows out from such process.\par
As an $\i$-categorical analogue, following the discussion made in our introduction section, we may formulate a conjecture as follows.
\begin{conj}\label{mainconj}
    Given a polynomial comonad $p$ over $\S$, there is a canonical construction on $p$ that outputs a complete Segal space. Moreover, up to equivalence every complete Segal space arises from a such construction.
\end{conj}
In the rest of the paper, we will be concerned with one direction that goes from a polynomial comonad to a Segal space. We have not fully succeeded, the best we can offer is the following result, that will be proved in \cref{lastsubsec}. 
\begin{thm}\label{zby}
    If $(E\xrightarrow{p}B)\in\P$ is a comonoid object, then there exists an augmented cosimplicial space $X:\Delta_+\rightarrow\S$, such that \begin{equation}
        X_{-1}\simeq B,\ \ X_0\simeq E,\ \ X_1\simeq E\times_BE,\ \ \cdots,\ \ X_n\simeq\underbrace{E\times_BE\cdots \times_BE}_{(n+1)\textrm{ copies of }E},\ \ \cdots.
    \end{equation}Here the limit is taken over $E\xrightarrow{q}B\xleftarrow{p}E\xrightarrow{q}B\cdots\xleftarrow{p}E$, in which the morphism $q$ is induced by $\delta_1$, the comultiplication on positions.
\end{thm}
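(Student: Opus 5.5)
The plan is to build $X$ by a décalage-type construction: from the comonoid section $q:\Delta\rightarrow\P^\otimes$ we extract an augmented cosimplicial object in $\P$ (or in $\S_{/\S}$), and then apply a colimit-type functor to land in $\S$. Recall that $q([n])\simeq(p,\dots,p)$. Using the Cartesian lift of the active map $[1]\rightarrow[n]$ (sending $0\mapsto0$, $1\mapsto n$), we obtain a polynomial $q_n\simeq p^{\circ n}$. Every morphism $[n]\rightarrow[m]$ in $\Delta$ then induces a morphism $p^{\circ n}\rightarrow p^{\circ m}$ in $\P$. This is done by composing $q(f)$ with the Cartesian transport, i.e.\ by straightening the Cartesian fibration $\P^\otimes\rightarrow\Delta$ restricted along the section $q$. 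The result is a functor $Q:\Delta\rightarrow\P$ with $Q([n])=p^{\circ n}$ and $Q([0])=id_{\S}$. The morphisms are the iterated comultiplications and counits, and their coherence is automatic because it is encoded in the section $q$.

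Next I would shift indices so that $[n]\mapsto p^{\circ(n+1)}$. Precomposing $Q$ with the join functor $[n]\mapsto[0]\star[n]$, defined on $\Delta_+$, gives $Q':\Delta_+\rightarrow\P$ with $Q'([-1])=p$ and $Q'([n])=p^{\circ(n+2)}$. Here I intend the shift that makes position spaces come out correctly, namely $X_n$ is the position space of $p^{\circ(n+2)}$ restricted appropriately. I then choose between two functors to $\S$.

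The first option is to take position spaces. Evaluation at $*$ is a functor $\P\rightarrow\S$, and by \cref{fuheji} the position space of $p^{\circ n}$ is $B_n$. This space is too large: $B_2=\colim_B Map_\S(p[b],B)$ rather than $E\times_BE$.

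The second option, which I plan to use, is to take total spaces. The functor $\P\rightarrow\S$ sending $(E\rightarrow B)$ to $E$ is not covariant on $\P$, because direction maps go backwards. Positions of $p^{\circ(n+1)}$ lying over the image of the iterated comultiplication $\delta^{(n)}_1:B\rightarrow B_{n+1}$ are exactly the ``trees'' determined by $p$. Pulling back the bundle $E_{n+1}\rightarrow B_{n+1}$ along $\delta^{(n)}_1$ gives $B\times_{B_{n+1}}E_{n+1}$. By the recursive description in \cref{fuheji} and the pasting law, this fiber product is
\begin{equation}
E\times_BE\times_B\cdots\times_BE\quad(n+1\ \text{copies}),
\end{equation}
where the maps $q$ come from $\delta_1$. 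The direction components $(\delta^{(n)})^\sharp$ then furnish maps out of these spaces into $E$. So the candidate is $X_n:=B\times_{B_{n+1}}E_{n+1}$, with $X_{-1}=B$ obtained from $p^{\circ0}=id_\S$, whose total space over $*$ is $*$, pulled back along $B\rightarrow *$.

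To make this functorial in $\Delta_+$ I would work in the comma category $\S_{/\S}$ of $\i$-groupoid-indexed diagrams. I would use the contravariant functor $\P^{op}\rightarrow\S_{/\S}$ that records $(B_n\xrightarrow{p^{\circ n}}\S)$, together with the fact that morphisms of $\P$ consist of a map on bases and a natural map backwards on fibers, as in the remark following the mapping space theorem. Restricting along $\delta^{(n)}_1$ and taking the colimit $c$ from \cref{sovers} yields $X_n$. The cosimplicial structure maps come from the coface and codegeneracy morphisms of $Q'$; comultiplications and counits act on directions by $\delta^\sharp$ and $\epsilon^\sharp$. The coherence of these maps follows from \eqref{ssssss}, namely $(\psi\circ\varphi)^\sharp\simeq\varphi_1^*(\psi^\sharp)$, applied functorially.

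\textbf{Main obstacle.} The hard step is producing a genuine $\i$-functor $\Delta_+\rightarrow\S$, not merely its objects and generating maps. The variance is mixed, covariant on positions and contravariant on directions, and this is exactly the $(\i,2)$-categorical issue flagged in \cref{inftytwo}. I would first try to resolve it by realizing $\P$ as a full subcategory of a twisted arrow or lax comma construction over $\S_{/\S}$. In such a construction a morphism $p\rightarrow p'$ corresponds to a span: a map $B\rightarrow B'$ together with a map $E'\times_{B'}B\rightarrow E$. Then $X$ would be obtained by applying the total-space functor to the induced diagram of spans. The identification of the individual $X_n$ via pasting of pullbacks is routine by comparison.
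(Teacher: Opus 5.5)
There is a genuine gap, and it is at the first step: the functor $Q:\Delta\to\P$ with $Q([n])=p^{\circ n}$ does not exist. Transport of $q([n])=(p,\dots,p)$ to the fiber over $[1]$ along $\iota_n:\{0,n\}\hookrightarrow[n]$ is functorial only along morphisms $f:[n]\to[m]$ with $f\circ\iota_n=\iota_m$, i.e.\ $f(0)=0$ and $f(n)=m$ (plus the maps to $[0]$). For a general $f$, the section only gives maps $p\to p^{\circ(f(i)-f(i-1))}$, hence $p^{\circ n}\to p^{\circ(f(n)-f(0))}$, so the outer factors of $p^{\circ m}$ are never reached. For instance, $d^0:[0]\to[1]$ would require a map $id_{\S}\to p$, which a comonoid does not provide.

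The join shift does not repair this. First, $[0]\star-$ yields maps preserving the minimum but not the maximum. Second, the indices are off: $[0]\star[n]=[n+1]$, so $Q'([-1])=id_{\S}$ and $Q'([n])=p^{\circ(n+1)}$. Third, $Q'$ is covariant on $\Delta_+$, so composing with a functor that is contravariant on directions produces an augmented \emph{simplicial} space, not a cosimplicial one.

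The missing idea is the paper's functor $e_+$ (\cref{maincons}, \cref{finsab}). The subcategory $\tilde\Delta_+\subset\Delta$ of endpoint-preserving maps between the $[n]$, $n\geq 1$, together with the terminal $[0]$, is isomorphic to $\Delta_+^{op}$, and it has $[1]$ as an initial object. After restricting $\P^\otimes\to\Delta$ to $\tilde\Delta_+$, that initial object makes transport to the fiber $\P$ over $[1]$ an honest functor. The section therefore yields an augmented simplicial object $\tilde X:\Delta_+^{op}\to\P$, $[n-1]\mapsto p^{\circ n}$. Because $\tilde\Delta_+\simeq(\tilde\Delta_+)_{[1]/}$, it lifts for free to the coslice $(\P)_{p/}$, each $p^{\circ n}$ coming with its structure map $p\to p^{\circ n}$.

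This coslice is what dissolves your ``main obstacle''. The assignment you describe is not a functor $\P^{op}\to\S_{/\S}$: a morphism $\varphi$ moves positions forward, $\varphi_1:B\to B'$, and $\varphi^\sharp$ is not invertible. By contrast, $g:(\P)^{op}_{p/}\to\S_{/E}$, $\varphi\mapsto\varphi^\sharp$, is functorial by \eqref{ssssss} and the pasting law. Then $X$ is the composite of $\tilde X^{op}$, $g$ and the forgetful functor, with $X_{n-1}=B\times_{B_n}E_n$; no twisted-arrow or $(\infty,2)$-categorical input is needed.

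Your identification of the objects agrees with the paper's, but the Segal-type part is not merely routine. The counit law is needed to see that $\delta_1$ is a section of $B_2\to B$; this is what defines $q:E\to B$ through $\lim_BMap_{\S}(p[b],B)\simeq Map_{\S}(E,B)$. One then proves by induction that the iterated comultiplication on positions is $b\mapsto f_n\circ\delta_1(b)$, and only after that does the pasting and double-colimit comparison go through.
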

\begin{rmk}
    Even we manage to prove \cref{mainconj}, it is not clear how to promote it into a categorical equivalence as in the ordinary categorical case. After all, we have not studied $\i$-categorical retrofunctors yet. We note that, via the coYoneda embedding, the opposite category of representable polynomial comonads is equivalent to the category of $A_\i$-spaces. The latter category is known to be equivalent to the $\i$-category of small pointed connected $\i$-categories via delooping. So our conjectural equivalence should subsume this result and in some sense serve as a many-object generalization of it, which seems to be daunting.
\end{rmk}

\subsection{A d{\'e}calage-like functor}
We construct a functor denoted as $e$, which will play an important role in the proof of \cref{zby}. We arrange this construction as a separated subsection, because the author speculates that it might be of independent interests.  
\begin{cons}\label{maincons}
    Consider $e:\Delta^{op}\rightarrow\Delta$ defined by \begin{itemize}
    \item On objects, $e([n-1])=[n]$, $\forall n\in\mathbb{N}^*$.
    \item On morphisms, for a morphism $[m-1]\rightarrow[n-1]$ in $\Delta^{op}$ expressed as a non-strictly increasing map $f:\{1,\cdots,n\}\rightarrow\{1,\cdots,m\}$, the image $e(f):[m]\rightarrow[n]$ is the map that for each $i=0,1,\cdots,m$, sends $i$ to the maximal element in $f^{-1}(\leq i)$\footnote{If $f^{-1}(\leq i)=\varnothing$, then $i\mapsto0$. Equivalently to incorporate this special case, we may add a supplementary value $f(0)=0$ to the map $f$.}. 
    \end{itemize}
\end{cons}
\begin{prop}\label{zuow}
    In the construction above, $e$ is a faithful functor whose image $\tilde{\Delta}$ is the subcategory of $\Delta$ defined by \begin{equation}
    Ob(\tilde{\Delta})=\{[n]|n\geq1\},\ Mor(\tilde{\Delta})=\{f:[m]\rightarrow[n]|f(0)=0,f(m)=n\}.
    \end{equation}
\end{prop}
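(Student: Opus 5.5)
The plan is to recognize $e$ as the operation of taking \emph{right adjoints} of monotone maps between finite linear orders, viewed as posets. All of the claims in \cref{zuow} then follow from the formal properties of Galois connections.

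\textbf{Setup.} A morphism $[m-1]\rightarrow[n-1]$ in $\Delta^{op}$ is a monotone map $f:\{1,\cdots,n\}\rightarrow\{1,\cdots,m\}$. As in the footnote to \cref{maincons}, I will extend it to $\bar f:[n]\rightarrow[m]$ by setting $\bar f(0)=0$. This extension is still monotone, and it has the additional property
\begin{equation}
\bar f(j)\geq1\ \text{ for all } j\geq1. \tag{$\ast$}
\end{equation}
By definition, $e(f)(i)=\max\{j\in[n]\mid \bar f(j)\leq i\}$, and this maximum exists because $j=0$ always qualifies. The key observation is the Galois connection
\begin{equation*}
\bar f(j)\leq i\iff j\leq e(f)(i),\qquad \forall\, i\in[m],\ j\in[n].
\end{equation*}
To verify it, note that the set $\{j\mid \bar f(j)\leq i\}$ is downward closed, because $\bar f$ is monotone. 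Hence it equals $\{0,\cdots,e(f)(i)\}$. So $e(f)$ is the right adjoint of $\bar f$, and in particular $e(f)$ is monotone, i.e.\ a morphism of $\Delta$.

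\textbf{Functoriality.} Right adjoints compose contravariantly: if $g$ is another composable map, then $\overline{g\circ f}=\bar g\circ\bar f$ still fixes $0$. Since adjoints are unique in posets, the right adjoint of $\bar g\circ\bar f$ is $e(f)\circ e(g)$. Likewise the identity map is its own right adjoint. This is exactly the statement that $e$ is a functor $\Delta^{op}\rightarrow\Delta$.

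\textbf{Faithfulness.} In a poset, the left adjoint is recovered from the right adjoint by
\begin{equation*}
\bar f(j)=\min\{i\mid j\leq e(f)(i)\}.
\end{equation*}
Hence $e(f)$ determines $\bar f$, and therefore determines $f$.

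\textbf{Image is contained in $\tilde{\Delta}$.} There are two conditions to check.
\begin{itemize}
\item Since $\bar f(n)\leq m$, the Galois connection gives $e(f)(m)=n$.
\item By $(\ast)$, no $j\geq1$ satisfies $\bar f(j)\leq0$, so $e(f)(0)=0$.
\end{itemize}

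\textbf{Surjectivity onto $\tilde{\Delta}$.} Take any monotone $g:[m]\rightarrow[n]$ with $g(0)=0$ and $g(m)=n$. Define
\begin{equation*}
h(j)=\min\{i\mid j\leq g(i)\}.
\end{equation*}
The minimum exists because $g(m)=n$. Then:
\begin{itemize}
\item $h$ is monotone;
\item $h(0)=0$;
\item $h(j)\geq1$ for $j\geq1$, because $g(0)=0<j$;
\item $h$ is left adjoint to $g$, since $\{i\mid j\leq g(i)\}$ is upward closed.
\end{itemize}
Thus $h$ restricts to a monotone $f:\{1,\cdots,n\}\rightarrow\{1,\cdots,m\}$ with $\bar f=h$, and $e(f)=g$ by uniqueness of adjoints. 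Finally, $\tilde{\Delta}$ is visibly closed under composition and contains identities, so it is a subcategory.

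\textbf{Expected obstacle.} The only real care needed is at the boundary. One must check that the convention $\bar f(0)=0$ makes $\bar f$ preserve the bottom element, so that the right adjoint exists. One must also check that the two endpoint conditions $g(0)=0$ and $g(m)=n$ correspond exactly to $\bar f$ being a genuine extension of a map $\{1,\cdots,n\}\rightarrow\{1,\cdots,m\}$: the condition $g(m)=n$ makes $h$ defined everywhere, and $g(0)=0$ guarantees $(\ast)$. I expect this bookkeeping of edge cases to be the main thing to get right; everything else is formal.
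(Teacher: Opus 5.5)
Your proof is correct, but it is organized around a different key fact than the paper's, so a brief comparison is worthwhile.

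\textbf{The paper's route.} The paper argues by direct computation.
\begin{itemize}
\item For functoriality, it writes $i\mapsto j\mapsto k$ under $e(g)$ and then $e(f)$. It uses the characterizations $g(j)\le i<g(j+1)$ and $f(k)\le j<f(k+1)$ and chains them through $g$.
\item For faithfulness and the image, it notes that $e(f)$ fixes both endpoints. It then writes down the preimage of an endpoint-preserving $\tilde f$ explicitly as $\tilde f(i-1)+1,\cdots,\tilde f(i)\mapsto i$.
\item It also sketches a second proof via the generators ($d_i\mapsto s_i$, $s_{i-1}\mapsto d_i$) and the simplicial identities.
\end{itemize}

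\textbf{Your route.} You isolate a single structural fact: $e(f)$ is the right Galois adjoint of the bottom-preserving extension $\bar f$. Functoriality, faithfulness and the description of the image then all follow from the formal calculus of adjunctions between posets.

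\textbf{How they relate.} At the level of formulas the two arguments agree. The paper's explicit preimage is exactly your left adjoint $h(j)=\min\{i\mid j\le g(i)\}$. The inequalities $g(j)\le i<g(j+1)$ are your Galois connection written out.

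\textbf{What your framing buys.}
\begin{itemize}
\item The boundary cases the paper handles by tacit convention (empty preimages, and $g(j+1)$ when $j$ is the top element) are absorbed into the single convention $\bar f(0)=0$.
\item Monotonicity of $e(f)$ and uniqueness of the preimage, both left implicit in the paper, come for free.
\item It makes visible that $e$ is an isomorphism $\Delta^{op}\cong\tilde{\Delta}$. This is the classical Joyal duality between $\Delta^{op}$ and finite strict intervals.
\end{itemize}

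\textbf{What the paper's route buys.} Its generator-based proof directly supplies the dictionary between faces and degeneracies that it later uses to read off the cosimplicial structure. In your language that dictionary is recovered from the standard adjunctions $d_{i+1}\dashv s_i\dashv d_i$, together with the index shifts $\overline{d_i}=d_{i+1}$ and $\overline{s_{i-1}}=s_i$ produced by the extension $f\mapsto\bar f$.
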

\begin{proof}
    For the functoriality, given two consecutive non-strictly increasing maps\begin{equation}
        \{1,\cdots,n\}\xrightarrow{f}\{1,\cdots,m\}\xrightarrow{g}\{1,\cdots,r\}
    \end{equation}we need to verify that $e(g\circ f)=e(f)\circ e(g):[r]\rightarrow[n]$. For each $i\in[r]$, assume $i\overset{e(g)}{\mapsto}j\overset{e(f)}{\mapsto}k$. From the definition of $e$ we deduce that \begin{equation}
        g(j)\leq i<i+1\leq g(j+1),\ \ \ f(k)\leq j<j+1\leq f(k+1).
    \end{equation} Since $g$ is order-preserving, these two inequalities above imply that \begin{equation}
        g(f(k))\leq g(j)\leq i<i+1\leq g(j+1)\leq g(f(k+1)).
    \end{equation} Therefore $k=e(g\circ f)(i)$. Clearly $e$ also preserves identity morphisms, so $e$ is a functor. Note that $e(f)$ always preserves the minimal and the maximal elements. Conversely, given a such map $\tilde{f}:[m]\rightarrow[n]$, it uniquely determines a map $f:[m-1]\rightarrow[n-1]$ in $\Delta^{op}$ such that $e(f)=\tilde{f}$ as follows:\begin{equation}
        \tilde{f}(i-1)+1,\cdots,\tilde{f}(i)\overset{f}{\mapsto}i,\ \ \ \forall 1\leq i\leq m.
    \end{equation} Hence $e$ is faithful and has the image $\tilde{\Delta}$ as desired.\par
    In fact, for the functoriality we have an alternative proof based on describing \cref{maincons} in terms of face and degeneracy maps (which might be a better way to understand the functor $e$). Note that $e$ sends $d_i^n$ to $s_i^n$ ($\forall0\leq i\leq n$), and $s_{i-1}^{n-1}$ to $d_i^{n+1}$ ($\forall1\leq i\leq n$). The relations among these generators are known as the simplicial identities: \begin{align}
        d_j^nd_i^{n-1}&=d_i^nd_{j-1}^{n-1},\ \ \ (i<j).\\
        s_j^nd_i^{n+1}&=d_i^ns_{j-1}^{n-1},\ \ \ (i<j).\\
        s_j^nd_i^{n+1}&=id,\ \ \ \ \ \ \ \ \ \ \ (i=j,j+1).\\
        s_j^nd_i^{n+1}&=d_{i-1}^ns_j^{n-1},\ \ \ (i>j+1).\\
        s_j^{n-1}s_i^n&=s_i^{n-1}s_{j+1}^n,\ \ \ (i\leq j).
    \end{align}
    It is then tedious but straightforward to verify that the $i$-th equation above is sent by $e$ to the $(6-i)$-th equation, $\forall 1\leq i\leq5$ (note the contravariance of $e$). Hence the relations are preserved, $e$ is a functor.
\end{proof}

Given a simplicial object $X$ (in any category), slightly abusing \cref{shouldi}, we still use $d_i$ and $s_i$ to denote the face and degeneracy morphisms in $X$:\begin{equation}
	X_0\overset{s_0}{\underset{d_0,d_1}\rightleftarrows}X_1\overset{s_0,s_1}{\underset{d_0,d_1,d_2}\rightleftarrows}X_2\overset{s_0,s_1,s_2}{\underset{d_0,d_1,d_2,d_3}\rightleftarrows}X_3\rightleftarrows\cdots.
\end{equation}By precomposing with $e^{op}$, we obtain a cosimplicial object \begin{equation}
	X_1\overset{s_0,s_1}{\underset{d_1}\rightleftarrows}X_2\overset{s_0,s_1,s_2}{\underset{d_1,d_2}\rightleftarrows}X_3\rightleftarrows\cdots.
\end{equation}We summarize by saying that inside every simplicial object sits a cosimplicial object, and conversely inside every cosimplicial object sits a simplicial object.
\begin{rmk}
    This shifting effect described above should remind one of the notion of \textit{d{\'e}calage} introduced in \cite{illusie2006complexe}. While our functor $e$ discards all the outer face morphisms $d_0^n,d_n^n$ and serves as a transition between simplicial and cosimplicial objects, by discarding all the last face and degeneracy morphisms $s_n^n,d_n^n$, the d{\'e}calage of a simplicial object produces a new simplicial object, and is related to path space objects. To the best of the author's knowledge, our shifting functor has not been made explicit in published literature, but there is a deja vu in \cite[construction 4.1.2.9]{HA}: what Lurie called a cut functor is in some sense an inverse to our construction. Anyway, the functor $e$ might be an interesting topic to study in its own right, which we will leave for future investigation.  
\end{rmk}
\begin{var}\label{finsab}
    Let $\Delta_+$ denote the augmented simplex category. We can extend $e^{op}$ to a functor $e_+:\Delta_+\rightarrow\Delta^{op}$, by requiring that $e_+$ sends the initial object $[-1]:=\varnothing\in\Delta_+$ to the initial object $[0]\in\Delta^{op}$. To strengthen our summary, we can now say that inside every simplicial object sits an augmented cosimplicial object. 
\end{var}

\subsection{The main construction}\label{lastsubsec}
We return to \cref{mainconj}. Naively, given a polynomial comonad $E\xrightarrow{p}B$ over $\S$, we may expect that there is a simplicial space $\Delta^{op}\rightarrow\S$ that sends $[n]$ to $E\times_BE\cdots \times_BE$ (there are $n$ copies of $E$). In fact, it is not hard to write down all the face and degeneracy morphisms here. But as pointed out in \cite[warning 10.2.0.5]{Kerodon}, unlike the 1-categorical case, to determine a simplicial object it does not suffice to just specify the morphism generators, as is always when working with $\i$-categories. We must find a slick way to argue. Another problem is that, even we can construct such a Segal space, there is no reason for it to be complete. We might need to apply the completion functor to it (\cite[section 14]{rezk2001model}), which seems to be unsatisfactory.\par
We will content ourselves with a partial result here: roughly speaking, we will construct this conjectural Segal space after precomposing with the functor $e_+$ in \cref{finsab}. Inspired by the graphic illustration we made in \cref{lablesubsec}, and adopting the notations in \cref{fuheji} (so in particular $E_0=B_0=*$, $E_1=E$ and $B_1=B$), we will consider the spaces \begin{equation}
    X_{n-1}:=B\times_{B_n}E_n,\ \ \ n\in\mathbb{N}.
\end{equation}Here the morphism $B\rightarrow B_n$ is the positional morphism of $p\rightarrow p^{\circ n}$, induced by $[1]\cong\{0,n\}\hookrightarrow[n]$. In what follows, we will prove \cref{zby}, by showing that the spaces $X_{-1},X_0,\cdots$ form the augmented cosimplicial space as desired.
\begin{proof}
    We divide the proof into two parts.\begin{enumerate}
    \item In the first part, we will construct the augmented cosimplicial structure. Let $s:\Delta\rightarrow\P^{\otimes}$ be the section that defines the comonoidal structure on $p$. Consider the image of the faithful functor $e_+^{op}:\Delta_+^{op}\rightarrow\Delta$, denoted as $\tilde{\Delta}_+$. Then $\tilde{\Delta}_+$ is just $\tilde{\Delta}$ from \cref{zuow} with the final object $[0]$ adding to it. Consider the Cartesian fibration $\widetilde{\P^{\otimes}}\rightarrow\tilde{\Delta}_+$ restricted from the monoidal structure $\P^{\otimes}\rightarrow\Delta$. Since $\tilde{\Delta}_+$ has an initial object $[1]$ and $\P$ is exactly the fiber at it, we have a functor $f:\widetilde{\P^{\otimes}}\rightarrow\P$. Specifically, on the fiber at $[n]$ this functor $f$ is given by the Cartesian lift of $[1]\cong\{0,n\}\hookrightarrow[n]$ in $\tilde{\Delta}_+$. Precomposing $f$ with the restricted section $s$ on $\tilde{\Delta}_+$ yields a functor \begin{equation}
        \tilde{X}:\Delta_+^{op}\cong\tilde{\Delta}_+\xrightarrow{s}\widetilde{\P^{\otimes}}\xrightarrow{f}\P.
    \end{equation} The upshot is that we obtain an augmented simplicial object $\tilde{X}$ in $\P$: \begin{equation}\label{augsump}
			id_{\mathcal{S}}\underset{d_0}\leftarrow p\overset{s_0}{\underset{d_0,d_1}\rightleftarrows}p\circ p\overset{s_0,s_1}{\underset{d_0,d_1,d_2}\rightleftarrows}p\circ p\circ p\overset{s_0,s_1,s_2}{\underset{d_0,d_1,d_2,d_3}\rightleftarrows}p^{\circ4}\rightleftarrows\cdots.
		\end{equation}Here, all the face morphisms are induced by the counit $\epsilon$ and all the degeneracy morphisms are induced by the comultiplication $\delta$, in particular the first $d_0$ and $s_0$ are exactly $\epsilon$ and $\delta$. Next, consider given a morphism $\varphi:p\rightarrow p^\prime$ in $\P$, the induced morphism on directions $\varphi^\sharp:E^\prime\times_{B^\prime}B\rightarrow E$ can be viewed as an object in $\S_{/E}$. This defines a functor \begin{equation}
		    g:(\P)^{op}_{p/}\rightarrow\S_{/E},\ \ \ \varphi\mapsto\varphi^\sharp.
		\end{equation} We can check the functoriality here on mapping spaces, which is essentially \cref{ssssss} and the pasting law:\begin{equation}
		Map_{p/}(p\xrightarrow{\varphi}p^\prime,p\xrightarrow{\psi}p^{\prime\prime})\rightarrow Map_{/E}(E^{\prime\prime}\times_{B^{\prime\prime}}B,E^\prime\times_{B^\prime}B),\ \Phi\mapsto\varphi_1^*(\Phi^\sharp).
		\end{equation} Combining all of the above, we finally have constructed a functor \begin{equation}
		    X:\Delta_+\cong(\tilde{\Delta}_+)_{[1]/}^{op}\xrightarrow{\tilde{X}^{op}}(\P)^{op}_{p/}\xrightarrow{g}\S_{/E}\xrightarrow{forget}\S.
		\end{equation}Unwinding the notions, we see that $X$ is the augmented cosimplicial space as below \begin{equation}\label{idieh}
		    B\underset{d_0}{\rightarrow}E\overset{s_0}{\underset{d_0,d_1}\leftrightarrows}B\times_{B_2}E_2\overset{s_0,s_1}{\underset{d_0,d_1,d_2}\leftrightarrows}B\times_{B_3}E_3\overset{s_0,s_1,s_2}{\underset{d_0,d_1,d_2,d_3}\leftrightarrows}B\times_{B_4}E_4\rightleftarrows\cdots.
		\end{equation} Here, the first $d_0$ is induced by the counit $\epsilon$ as a global section.
    \item In the second part, we will prove the Segal-like condition. We first define another morphism $q:E\rightarrow B$ besides $p$. In \cref{augsump}, because $s_0^0\circ d_1^1=id_{[0]}$, $p\circ p\xrightarrow{d_1}p\circ id_{\S}\simeq p$ is a retract with respect to $\delta:p\rightarrow p\circ p$. It follows that \begin{equation}
        \delta_1:B\rightarrow B_2\simeq\colim_BMap_{\S}(p[b],B)
    \end{equation} is a section. By \cref{repeat2}, we can view $\delta_1$ as a point in \begin{equation}
        \lim_BMap_{\S}(p[b],B)\simeq Map_{\S}(\colim_Bp[b],B)\simeq Map_{\S}(E,B)
    \end{equation} which we denote by $q$\footnote{Here if we switch to \cref{reviewdef}, the map $q:E\rightarrow B$ can also be characterized as follows: because $B_2\simeq p(B)\simeq p_*(B\times E)$, under the adjunction $p^*\dashv p_*$, the map $\delta_1:B\rightarrow p_*(B\times E)$ over $B$ is sent to a map $p^*(B)\simeq E\rightarrow B\times E$ over $E$, which is just a map $q:E\rightarrow B$.}. Next, in the augmented cosimplicial space $X$ constructed in the first part, we will prove that \begin{equation}
        X_{n-1}\simeq B\times_{B_n}E_n\simeq\lim(\underbrace{E\xrightarrow{q}B\xleftarrow{p}E\xrightarrow{q}B\cdots\xleftarrow{p}E}_{(n-1)\textrm{ copies of }q\textrm{ and }p})
    \end{equation} by induction on $n\in\mathbb{N}$. The cases $n=0,1$ are vacuously true. It remains to show that \begin{equation}
        X_n\simeq E\times_BX_{n-1}\simeq\lim(E\xrightarrow{q}B\xleftarrow{f_n^*(p^{\circ n})}X_{n-1})
    \end{equation} for every $n\geq2$, where $f_n:B\rightarrow B_n$ is the positional morphism of $p\rightarrow p^{\circ n}$ in \cref{augsump}. But by the pasting law, we are reduced to showing that \begin{equation}\label{yierer}
        X_n\simeq E\times_B(B\times_{B_n}E_n)\simeq E\times_{B_n}E_n\simeq\lim(E\xrightarrow{f_n\circ q}B_n\xleftarrow{p^{\circ n}}E_n).
    \end{equation} To this end, we first prove by an easy induction that \begin{equation}
        f_{n+1}:B\rightarrow B_{n+1}\simeq\colim_BMap_{\S}(p[b],B_n),\ \forall n\in\mathbb{N}^*
    \end{equation}is a section given by \begin{equation}\label{chchch}
        f_{n+1}:b\mapsto(p[b]\xrightarrow{f_n\circ\delta_1(b)}B_n).
    \end{equation} The base case $n=1$ has already been proved since $f_2=\delta_1$. For the induction step, we note that $f_{n+1}$ can be written as a composition \begin{equation}
      B\xrightarrow{\delta_1}B_2\simeq\colim_BMap_{\S}(p[b],B)\xrightarrow{\colim_B(f_n\circ-)}B_{n+1}\simeq\colim_BMap_{\S}(p[b],B_n)
    \end{equation} where the second arrow is the map $p(f_n):B_2\simeq p(B)\rightarrow B_{n+1}\simeq p(B_n)$. Then it is clear that the case of $n+1$ follows from the case of $n$, $\forall n\in\mathbb{N}^*$.\par
    Return to the proof of \cref{yierer}. Since pullbacks in $\S$ preserve colimit, \begin{equation}
      E\times_{B_n}E_n\simeq\colim(E\xrightarrow{q}B\xrightarrow{f_n}B_n\xrightarrow{p^{\circ n}}\S). 
    \end{equation}Recall from the construction of $q$, that $q|_{p[b]}=\delta_1(b)$, $\forall b\in B$. Then by \cref{chchch}, the diagram above over which we take the colimit restricted on the fiber $p[b]$ becomes \begin{equation}
        p[b]\xrightarrow{f_{n+1}(b)}B_n\xrightarrow{p^{\circ n}}\S,\ \ \ \forall b\in B.
    \end{equation} On the other hand, we have \begin{equation}
        X_n\simeq B\times_{B_{n+1}}E_{n+1}\simeq\colim(B\xrightarrow{f_{n+1}}B_{n+1}\xrightarrow{p^{\circ(n+1)}}\S).
    \end{equation} Since $p^{\circ(n+1)}\simeq p\circ p^{\circ n}$, recall from the explicit description of polynomial composition, that \begin{equation}
        p^{\circ(n+1)}[f_{n+1}(b)]\simeq\colim_{p[b]}(p^{\circ n}\circ f_{n+1}(b)),\ \ \ \forall b\in B.
    \end{equation}Comparing the above four equations, we immediately deduce that $X_n\simeq E\times_{B_n}E_n$ from \cref{dbcolim1}.
    \end{enumerate}We have thus completed the proof of our main theorem of this section.
\end{proof}
\begin{rmk}
    Let us conclude this paper by commenting on a possible way to improve the result. So we have constructed a functor $\Delta_+\rightarrow\S$ in \cref{idieh}, the remaining goal is to extend it to a functor $\Delta^{op}\rightarrow\S$. This can not be done via a Kan extension along $e_+$. Instead, a promising argument goes like the following: $\Delta^{op}$ has two wide subcategories, one is $e_+(\Delta_+)$, and the other one is spanned by all convex morphisms. It should not be hard to construct a functor on the latter subcategory to $\S$, also sending $[n]$ to $\underbrace{E\times_BE\cdots \times_BE}_{n\textrm{ copies of }E}$. Then the key observation is that the two wide subcategories here form an \textit{orthogonal factorization system} (\cite[section 9.3.8]{Kerodon}) of $\Delta^{op}$, and there should somehow be a way to patch these two functors into one on $\Delta^{op}$, as desired. However, this idea requires further precision.
\end{rmk}

\bibliographystyle{alpha}
\bibliography{main}

\begin{thebibliography}{GHK22}

\bibitem[AU16]{mainthm}
Danel Ahman and Tarmo Uustalu.
\newblock {\em Directed Containers as Categories}.
\newblock volume 207, pages 89--98, 2016.

\bibitem[BGN18]{barwick2018dualizing}
Clark Barwick, Saul Glasman, and Denis Nardin.
\newblock {\em Dualizing cartesian and cocartesian fibrations}.
\newblock {\em Theory Appl. Categ}, 33(4):67--94, 2018.

\bibitem[BS16]{clark}
Clark Barwick and Jay Shah.
\newblock {\em Fibrations in infinity-category theory}.
\newblock {\em preprint arXiv:math/1607.04343v3}, 2016.

\bibitem[GHK22]{gepner2022operads}
David Gepner, Rune Haugseng, and Joachim Kock.
\newblock {\em Infinity-operads as analytic monads}.
\newblock {\em International Mathematics Research Notices},
  2022(16):12516--12624, 2022.

\bibitem[GK13]{gambino2013polynomial}
Nicola Gambino and Joachim Kock.
\newblock {\em Polynomial functors and polynomial monads}.
\newblock In {\em Mathematical proceedings of the cambridge philosophical
  society}, volume 154, pages 153--192. Cambridge University Press, 2013.

\bibitem[Ill06]{illusie2006complexe}
Luc Illusie.
\newblock {\em {\em Complexe cotangent et d{\'e}formations I}}, volume 239.
\newblock Springer, 2006.

\bibitem[Lur07]{feijiaohuanl}
Jacob Lurie.
\newblock {\em Derived algebraic geometry II: noncommutative algebra}.
\newblock {\em preprint arXiv:math/0702299v5}, 2007.

\bibitem[Lur09a]{arxivagain}
Jacob Lurie.
\newblock {\em $(\infty,2)$-categories and the Goodwillie calculus I}.
\newblock {\em preprint arXiv:math/0905.0462v2}, 2009.

\bibitem[Lur09b]{HTT}
Jacob Lurie.
\newblock {\em {\em Higher Topos Theory}, volume 170 of {\em
  \MakeUppercase{a}nnals of \MakeUppercase{m}athematics
  \MakeUppercase{s}tudies}}.
\newblock Princeton University Press, Princeton, NJ, 2009.

\bibitem[Lur17]{HA}
Jacob Lurie.
\newblock {\em Higher Algebra}.
\newblock {\em http://www.math.harvard.edu/~lurie/papers/HA.pdf}, 2017.

\bibitem[Lur25]{Kerodon}
Jacob Lurie.
\newblock {\em Kerodon}.
\newblock {\em http://kerodon.net/kerodon.pdf}, 2025.

\bibitem[NS23]{niu2023polynomial}
Nelson Niu and David~I Spivak.
\newblock {\em Polynomial functors: A mathematical theory of interaction}.
\newblock {\em preprint arXiv:math/2312.00990v2}, 2023.

\bibitem[Rez01]{rezk2001model}
Charles Rezk.
\newblock {\em A model for the homotopy theory of homotopy theory}.
\newblock {\em Transactions of the American Mathematical Society},
  353(3):973--1007, 2001.

\bibitem[SGF25]{spivak2025functorial}
David~I Spivak, Richard Garner, and Aaron~David Fairbanks.
\newblock {\em Functorial aggregation}.
\newblock {\em Journal of Pure and Applied Algebra}, 229(2):107883, 2025.

\bibitem[Spi20]{dnmc}
David~I Spivak.
\newblock {\em Poly: An abundant categorical setting for mode-dependent
  dynamics}.
\newblock {\em preprint arXiv:math/2005.01894v2}, 2020.

\bibitem[Spi21]{lang}
David~I Spivak.
\newblock {\em Learners' languages}.
\newblock In {\em Proceedings of the 4th International Conference on Applied
  Category Theory}, pages 14--28, 2021.

\bibitem[Spi22a]{handbook}
David~I Spivak.
\newblock {\em A summary of categorical structures in Poly}.
\newblock {\em preprint arXiv:math/2202.00534v14}, 2022.

\bibitem[Spi22b]{shannon}
David~I Spivak.
\newblock {\em Polynomial functors and Shannon entropy}.
\newblock In {\em Proceedings of the 5th International Conference on Applied
  Category Theory}, pages 331--343, 2022.

\bibitem[SS23]{neizaifanchou}
Brandon~T Shapiro and David~I Spivak.
\newblock {\em Structures on categories of polynomials}.
\newblock {\em preprint arXiv:math/2305.00167v2}, 2023.

\bibitem[Tor25]{torii2025duoidal}
Takeshi Torii.
\newblock {\em On duoidal infinity-categories}.
\newblock {\em Journal of Homotopy and Related Structures}, 20(1):125--162,
  2025.

\bibitem[Web15]{weber2015polynomials}
Mark Weber.
\newblock {\em Polynomials in categories with pullbacks}.
\newblock {\em Theory Appl. Categ}, 30(16):533--598, 2015.

\end{thebibliography}
\end{document}